\algrenewcommand{\algorithmiccomment}[2][.6\linewidth]{\leavevmode\hfill\makebox[#1][l]{$\triangleright$ #2}}
\algnewcommand{\algorithmicgoto}{\textbf{goto}}%
\algnewcommand{\Goto}[1]{\algorithmicgoto~\ref{#1}}%
\newcommand{\COMMENT}[2][.47\linewidth]{%
  \leavevmode\hfill\makebox[#1][l]{$\triangleright$~#2}}
\newcommand{\1}{\xi}
\newcommand{\bits}{\mathcal B}
\renewcommand{\Re}{\mathbb{R}}
\newcommand{\I}{N}
\newcommand{\ead}{\dot{\mu}}
\newcommand{\ea}{\mu}
\newcommand{\bet}{\beta}
\newcommand{\cratio}{\vartheta}
\newcommand{\rh}{\rho}
\newcommand{\A}{\psi}
\newcommand{\kap}{\underline{\kappa}(A)}
\newcommand{\kapj}{\underline{\kappa}(A_j)}
\newcommand{\kapjm}{\underline{\kappa}(A_{j - 1})}
\newcommand{\m}{M}
\newcommand{\ir}{$\mathcal{IR}$}
\newcommand{\V}{$\mathcal{V}$}
\newcommand{\fmg}{$\mathcal{FMG}$}
\newcommand{\pfmg}{$\mathcal{PFMG}$}
\newcommand{\alp}{\dot{m}_P^+}
\newcommand{\alpnodot}{m_P^+}
\newcommand{\alabar}{{\bar m}_A^+}
\newcommand{\aladot}{{\dot{m}}_A^+}
\newcommand{\cc}{\sigma}
\newcommand{\ewe}{\boldsymbol{\varepsilon}}
\newcommand{\ewed}{\boldsymbol{\dot{\varepsilon}}}
\newcommand{\eweb}{\boldsymbol{\bar{\varepsilon}}}
\newcommand{\eweq}{\boldsymbol{\check{\varepsilon}}}
\newcommand{\rinner}{\rho}
\newcommand{\rvstar}{\rho_{v}^*}
\newcommand{\rv}{\rho_{v}}
\newcommand{\pvar}{\tau}
\newcommand{\pvarq}{{\check \tau}}
\newcommand{\pvarb}{{\bar \pvar}}
\newcommand{\pvard}{{\dot{\pvar}}}
\newcommand{\pir}{\delta_{\rho_{ir}}}
\newcommand{\thresh}{\chi}
\newcommand{\pv}{\delta_{\rho_{v}}}
\newcommand{\gmm}{\gamma}
\newcommand{\sol}{u}
\newcommand{\trialfun}{u}
\newcommand{\testfun}{v}
\newcommand{\hot}{\varphi}
\newcommand{\emm}{m}
\newcommand{\bigO}{\mathcal{O}}
\newcommand{\fespace}{\mathcal{U}}
\definecolor{lo}{HTML}{008000}
\definecolor{med}{HTML}{0000FE}
\definecolor{hi}{HTML}{FB0106}
\DeclareMathOperator{\fl}{fl}
\title{Discretization-error-accurate mixed-precision multigrid solvers\thanks{Submitted to the editors June 29, 2020.}}
\author{Rasmus Tamstorf\thanks{Walt Disney Animation Studios, Burbank, CA
    (\email{Rasmus.Tamstorf@disneyanimation.com},\newline\email{Joseph.Benzaken@disneyanimation.com}).}
\and Joseph Benzaken\footnotemark[3]
\and Stephen F. McCormick\thanks{University of Colorado at Boulder, Boulder, CO
  (\email{stephen.mccormick@colorado.edu}).}
}
\definecolor{darkblue}{rgb}{0.0,0.0,0.3}
\begin{document}

\maketitle

\begin{abstract}
  This paper builds on the algebraic theory in the companion paper {\em [Algebraic Error Analysis for Mixed-Precision Multigrid Solvers, submitted to SISC]} to obtain discretization-error-accurate solutions for linear elliptic partial differential equations (PDEs) by {\em mixed-precision} multigrid solvers. It is often assumed that the achievable accuracy is limited by discretization or algebraic errors. On the contrary, we show that the quantization error incurred by simply storing the matrix in any fixed precision quickly begins to dominate the total error as the discretization is refined. We extend the existing theory to account for these quantization errors and use the resulting bounds to guide the choice of four different precision levels in order to balance quantization, algebraic, and discretization errors in the progressive-precision scheme proposed in the companion paper. A remarkable result is that while iterative refinement is susceptible to quantization errors during the residual and update computation, the V-cycle used to compute the correction in each iteration is much more resilient, and continues to work if the system matrices in the hierarchy become indefinite due to quantization. As a result, the V-cycle only requires relatively few bits of precision per level. Based on our findings, we outline a simple way to implement a progressive precision FMG solver with minimal overhead, and demonstrate as an example that the one dimensional biharmonic equation can be solved reliably to any desired accuracy using just a few V-cycles when the underlying smoother works well. In the process we also confirm many of the theoretical results numerically. 
\end{abstract}

\begin{keywords}
  mixed precision, progressive precision, rounding error analysis, multigrid
\end{keywords}

\begin{AMS}



  65F10,65G50,65M55
\end{AMS}

\section{Introduction}
The {\em abstract} theory in \cite{McCormick2020} analyzes rounding-error effects of {\em algebraic} operations in mixed- and progressive-precision multigrid solvers. The main focus here is applying this analysis to solvers for discretized linear elliptic partial differential equations (PDEs), with the goal of obtaining accuracy on the order of the discretization error in the energy norm. Achieving such algebraic accuracy can often be done by {\em full multigrid (FMG)} at optimal cost (i.e., comparable to a few matrix multiplies on the finest level), but the approximation property that FMG relies on makes it especially sensitive to rounding errors. One of our main goals is to analyze this sensitivity in order to understand how to achieve optimal results.

To this end, first note that existing rounding-error analyses of linear solvers (e.g., \cite{Carson2017,Carson2018,Higham2019,McCormick2020}) typically assume that the target matrix is exact. In practice, this assumption is rarely satisfied since forming the matrix itself is subject to rounding errors. We extend the theory in \cite{McCormick2020} to include errors due to simply storing the linear system in finite precision. Referring to it as {\em quantization} error, we show that its effect grows much faster under mesh refinement than that of algebraic errors. Our analysis is purely algebraic in nature and therefore applies to linear systems regardless of their origin. It also applies to matrix-free methods because quantization happens regardless of whether the result is stored in main memory or just in a register. Even so, it should be emphasized that the quantization error is the smallest possible error one can consider for the formation process, so in some sense this still represents an optimistic analysis. In particular, we do not consider errors due to numerical quadrature during assembly of the linear system. 

To make the theory concrete, we consider classical finite element discretizations of PDEs, which facilitates quantifying the discretization error and comparing it to quantization and algebraic errors. This comparison in turn allows us to explore the optimal relationship between the different precision levels introduced in the progressive-precision multigrid solver in \cite{McCormick2020}. At first, it might appear as if the quantization error limits the benefit of using three precisions in iterative refinement. However, the benefit remains as long as the various precision levels are chosen carefully. Furthermore, because the inner solver only needs to reduce the residual in the iterative refinement scheme by a small amount, we prove that it is \emph{not} necessary to require that the system matrix remains positive definite when rounded to the lowest precision. Avoiding this requirement allows us to use very low precision for the inner solver where most of the computations are performed. By comparison, \cite{Higham2019} also uses low precision for the inner solver, but they require an unknown perturbation to be added to the low precision matrix to recover positive definiteness. 



We begin in the next section by introducing nomenclature for the different errors involved in our analyses. While we assume that the reader is familiar with \cite{McCormick2020}, Section~\ref{sec:fp} summarizes its essential definitions and theoretical estimates for completeness. In Section~\ref{galerkin} we consider the effects on the V-cycle of quantizing the multigrid components and computing them by the {\em Galerkin condition} \cite{tutorial}. The effects on FMG of quantizing the multigrid components are analyzed in Section~\ref{coarsening}. Section~\ref{sec:precision} brings the theory together to establish the progression requirements for the different precision levels studied throughout the paper. Section~\ref{sec:modelproblem} introduces a simple model problem based on the one dimensional biharmonic equation. This problem is then studied in Section~\ref{sec:results} with various mesh sizes and approximation orders. We illustrate the behavior of a standard V-cycle and also demonstrate that the problem can be solved reliably to any accuracy by progressive precision {\fmg } when the precision levels are chosen appropriately. We end the paper with some concluding remarks in the last section.

\section{Error definitions}
\label{sec:errordefs}
The numerical solution of PDEs in finite precision involves several different error sources. This section introduces notation and vocabulary to distinguish these basic types of errors. 

Consider a linear PDE of the form $\mathcal{L}u = f$ subject to some boundary conditions, with source term $f$ and exact solution $u$. Assume that $A_h x_h = b_h$ represents its discretization on a regular grid of element size $h$ by the Galerkin finite element method, where $A_h, x_h,$ and $b_h$ are exact. Assume also that $x_h = (x_{h,i})$, where the $x_{h,i}$ are the coefficients corresponding to the basis functions $\phi_{h,i}$ so that the finite element solution for grid $h$ is the function $u_h = \sum_i x_{h,i} \phi_{h,i}.$ The discretization error in the energy or $\mathcal{L}$ norm for grid  $h$ is then represented by $e_{\textrm{disc}} = \| u_h - u \|_{\mathcal{L}}$. In practice this is computed through quadrature using the bilinear form for the PDE.

Simply rounding the coefficients $x_{h,i}$ to $\bits$ bits is denoted by $\fl(x_h)$ and the corresponding continuous solution (with a slight abuse of notation) by $\fl(u_h) = \sum_i \fl(x)_{h,i}\phi_{h,i}$. Given this, the {\em floating-point error} due to representing the exact solution at grid level $h$ in $\bits$ bits of precision is denoted by $e_{\fl} = \| \fl(u_h) - u_h \|_{\mathcal{L}} = \|\fl(x_h) - x_h\|_{A_h}.$ This is the best level of energy error we can expect to obtain in finite precision.

Since computation with matrices and source terms require that they too be represented in working precision, we let $A_h$ and $b_h$ rounded to $\bits$ bits be denoted by $\check{A}_h$ and $\check{b}_h$, respectively. We also use the ha\v{c}ek diacritical mark for any quantities derived from $\check{A}_h$ and $\check{b}_h$. For example, $\check{x}_h$ represents the \emph{exact} solution of $\check{A}_h x = \check{b}_h$. Note that in general $\check{x}_h \ne \fl(x_h)$ because $\check{x}_h$ is based on $\check{A}_h$ while $\fl(x_h)$ is based on $A_h$ and then rounded to $\bits$ bits. In addition to ha\v{c}eks, we use tilde to denote values \emph{computed} from $\check{A}_h$ and $\check{b}_h$ and superscripts in parentheses for iterations. For example, for the numerical solution of $\check{A}_h x = \check{b}_h$, we denote the $i^\textrm{th}$ iterate by $\tilde{x}_h^{(i)}$ and the fully converged algebraic solution by $\tilde{x}_h^{(\infty)}$.

Corresponding to the vectors $\check{x}_h$, $\tilde{x}_h^{(i)}$, and $\tilde{x}_h^{(\infty)}$ are the functions $\check{u}_h$, $\tilde{u}_h^{(i)}$, and $\tilde{u}_h^{(\infty)}$, which allow us to write the {\em quantization error} as $e_{\textrm{quant}} = \| \check{u}_h - u_h \|_{\mathcal{L}}$ and the {\em algebraic error} as $e_{\textrm{alg}} = \|\tilde{u}_h^{(i)} - \check{u}_h \|_{\mathcal{L}} = \|\tilde{x}_h^{(i)} - \check{x}_h \|_{A_h}$. Note that $e_{\textrm{alg}} \ne \| \tilde{x}_h - \check{x}_h \|_{\check{A}_h}$ in general. Finally, note that while a multigrid algorithm would presumably converge to $(\check{A}_h)^{-1} \check{b}_h$ in infinite precision, it is generally limited from doing so in finite precision. Accordingly, we decompose the algebraic error into {\em iteration error} $e_{\textrm{iter}} = \|\tilde{u}_h^{(i)} - \tilde{u}_h^{(\infty)} \|_{\mathcal{L}}$ and {\em rounding error} $e_{\textrm{round}} = \|\tilde{u}_h^{(\infty)} - \check{u}_h\|_{\mathcal{L}}$. It might be argued that quantization error is also a kind of rounding error, but for purposes of this paper we will consider it separately.

Combined, these definitions allow us to write the total error after $i$ iterations as 
\begin{align}
e_{total}^{(i)} &= \| \tilde{u}_h^{(i)} - u \|_{\mathcal{L}} \nonumber\\
  &= \| \tilde{u}_h^{(i)} - \tilde{u}_h^{(\infty)} + \tilde{u}_h^{(\infty)} - \check{u}_h + \check{u}_h - u_h + u_h - u \|_{\mathcal{L}} \nonumber\\
&\leq \underbrace{\| \tilde{u}_h^{(i)} - \tilde{u}_h^{(\infty)} \|_{\mathcal{L}}}_{e_{\textrm{iter}}} + \underbrace{ \| \tilde{u}_h^{(\infty)} - \check{u}_h\|_{\mathcal{L}} }_{ e_{\textrm{round}} } + \underbrace{ \|\check{u}_h - u_h\|_{\mathcal{L}} }_{ e_{\textrm{quant}} } + \underbrace{ \|u_h - u \|_{\mathcal{L}} }_{ e_{\textrm{disc}} }.
\label{errdecomp}
\end{align}

In summary, quantities without tildes are exact (in infinite-precision arithmetic), those with tildes are computed,  those with ha\v{c}eks are based on the quantized versions of $A_h$ and $b_h$, and those with subscript $h$ have been discretized on a grid with element size $h$. We assume that there are no errors in computing $\check{A}_h$ aside from the quantization itself. Thus, any numerical integration used to compute $A_h$ must be sufficiently accurate and the algebraic error associated with evaluating functions at the quadrature points and summing must be insignificant. 

\section{Existing Theory}
\label{sec:fp}
This section summarizes the notation, conventions, and theory of \cite{McCormick2020}. Initially, we consider three floating point environments: ``standard'' precision with unit roundoff $\ewe$, ``high'' precision with unit roundoff $\eweb$, and ``low'' precision with unit roundoff $\ewed$. We also refer to these as $\ewe$-, $\eweb$-, and $\ewed$-precision respectively. While it is only formally assumed that ${\eweb} \le \ewe \le \ewed$, we address the choice of these precision levels in Section~\ref{sec:precision}.

The theory uses variables and expressions for exact quantities, with $\delta$'s added and estimated to represent quantities computed in finite precision. Thus, for $A \in \Re^{n \times n}$  symmetric positive definite (SPD) with at most $m_A$ nonzeros per row and $b \in \Re^n$, then  $A^{-1}b + \delta$ denotes a {\em computed} approximate solution with error $\delta$ of
\begin{equation}
Ax = b,
\label{axb}
\end{equation}
and $Ax - b + \delta_r$ denotes its {\em computed} residual with error $\delta_r$.

In what follows, we let $|z|$ for $z \in \Re^n$ denote the vector of the absolute values of $z$, and similarly for matrices. Inequalities and equalities between vectors and matrices are defined componentwise. Using $\| \cdot \|$ to denote the Euclidean norm for a vector and its induced matrix norm (together with the Euclidean inner product $\langle \cdot, \cdot \rangle$), we frequently use the fact that $\||z|\| = \|z\|$ for any vector $z$ (although this is not generally true for matrices).
Our rounding-error estimates are in terms of the discrete {\em energy} norm $\| \cdot \|_A$ defined by $\|x\|_A = \|A^{\frac{1}{2}}x\|, x \in \Re^n$. Following the usual convention in rounding-error analyses, we assume that $A$ and $b$ in (\ref{axb}) are exact. To rein in the complexity of our estimates, we take this assumption further by assuming exactness of all of the multigrid components: the intergrid transfer, the system matrix, and the right-hand side on all levels. However, we consider the effects of quantization of these components in Section~\ref{coarsening} because they are used specifically within the multigrid solvers. Notation used below includes
\begin{align*}
\kappa(A)&=\|A\| \cdot \|A^{-1}\|, \, \A = \||A|\|, \, \kap = \A \|A^{-1}\|, \alabar = \frac{m_A + 1}{1 - (m_A + 1){\eweb}}, \\
\dot{m}_A &= \frac{m_A }{1 - m_A{\ewed}}, \pvard = \kappa^\frac{1}{2}(A)\ewed, \pvar = \kappa^\frac{1}{2}(A)\ewe, \pvarb = \kappa(A){\eweb},  \gmm = \frac{\kappa^{\frac{1}{2}}(A) + \kap}{\kappa(A)}.
\end{align*}

The mixed-precision approach analyzed theoretically in \cite{McCormick2020} uses iterative refinement as the outer loop and a generic approximate linear solver as the inner loop. The pseudocode for iterative refinement (\ir), is given in Algorithm \ref{alg-ir} below. The floating-point operations in {\ir } use all three precisions. The full residual $r$ between successive calls to the inner solver is evaluated in $\eweb$-precision (red font), while the inner solver uses $\ewed$-precision (green font). All other operations use $\ewe$-precision (blue font).

\noindent
\begin{center}
\begin{minipage}{0.8\linewidth}
\begin{algorithm}[H]
\footnotesize
\caption{$\texttt{Iterative Refinement (\ir)}$}
\label{alg-ir}
\begin{algorithmic}[1]
      \Require A, b, x initial guess, tol $> 0$ convergence tolerance.
      \State \textcolor{med}{$r \leftarrow$}\textcolor{hi}{$ Ax - b$}\Comment{\textcolor{hi}{Compute {\ir } Residual} and \textcolor{med}{Round}}\label{ir-begin}
      \If{\textcolor{med}{$\|r\| <$ tol}}
      \State\Return $x$\Comment{\textcolor{med}{Return Solution of $Ax = b$}}
      \EndIf
      \State \textcolor{med}{$y \leftarrow$} \textcolor{lo}{$\texttt{InnerSolve}(A, r)$}\Comment{\textcolor{lo}{Compute Approximate Solution of $Ay = r$}}
      \State \textcolor{med}{$x \leftarrow x - y$}\Comment{\textcolor{med}{Update Approximate Solution of $Ax=b$}}
      \State\Goto{ir-begin}
\end{algorithmic}
\end{algorithm}
\end{minipage}
\end{center}
\vspace{1em}

\begin{thm}{\em \ir.  \cite{McCormick2020}}
Let $x^{(i)}$ be the iterate at the start of the $i^\textrm{th}$ cycle of {\ir } and $r = Ax^{(i)} - b$ its residual computed in ${\eweb}$-precision and rounded to $\ewe$-precision. Suppose that a $\rinner < 1$ exists such that, for any $r \in \Re^n$, the solver used in the inner loop of Algorithm~\ref{alg-ir} (line 5) is guaranteed to compute a correction $y$ that satisfies
\[
\|y - A^{-1} r\|_A \le \rinner \|A^{-1} r\|_A .
\]
Then $x^{(i+1)}$ approximates the solution $A^{-1} b$ of (\ref{axb}) with the {\em relative} error bound
\begin{equation}
\frac{\|x^{(i+1)} - A^{-1}b\|_A}{\|A^{-1}b\|_A} \le \rho_{ir} \frac{\|x^{(i)} - A^{-1}b\|_A }{\|A^{-1}b\|_A} + \thresh,
\quad \rho_{ir} = \rinner + \pir,
\label{ec}
\end{equation}
where
\begin{equation}
\pir = \frac{(1 + 2\rinner) \pvar + \gmm(1 + \rinner)(1 + \ewe)\alabar \pvarb}{1 - \pvar},
\quad \thresh = \frac{\pvar + \gmm(1 + \rinner)(1 + \ewe) \alabar\pvarb}{1 - \pvar}.
\label{chi}
\end{equation}
If $\rinner + \pir < 1$, then the error after $\I \ge 1$ {\ir } cycles with initial guess $x^{(0)}$ satisfies
\begin{equation}
\frac{\|x^{(\I)} - A^{-1}b\|_A}{\|A^{-1}b\|_A} \le (\rinner + \pir)^\I \frac{\|x^{(0)} - A^{-1}b\|_A}{\|A^{-1}b\|_A} + \frac{\thresh }{1 -(\rinner + \pir)} .
\label{Kbnd}
\end{equation}

\label{theorem2}
\end{thm}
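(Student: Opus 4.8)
The plan is to follow the error $e^{(i)} := x^{(i)} - A^{-1}b$ through one cycle of {\ir }, derive the one-step bound $\|e^{(i+1)}\|_A \le \rir\|e^{(i)}\|_A + \thresh\|A^{-1}b\|_A$ with $\rir = \rinner + \pir$ (which is exactly (\ref{ec}) after dividing by $\|A^{-1}b\|_A$), and then obtain (\ref{Kbnd}) by unrolling the resulting scalar recursion and bounding $\sum_{k=0}^{\I-1}\rir^k \le 1/(1-\rir)$; this last step is legitimate precisely because $\rir < 1$ is assumed.

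For the one-cycle estimate I would record the three error sources of a cycle as additive perturbations: the computed residual is $r = (Ax^{(i)} - b) + \delta_r = Ae^{(i)} + \delta_r$; the inner solver returns $y = A^{-1}r + (y - A^{-1}r)$ with $\|y - A^{-1}r\|_A \le \rinner\|A^{-1}r\|_A$ by hypothesis; and the update produces $x^{(i+1)} = (x^{(i)} - y) + \delta_u$ with $|\delta_u| \le \ewe|x^{(i)} - y|$ from the standard model for computing a vector difference in $\ewe$-precision. Since $A^{-1}r = e^{(i)} + A^{-1}\delta_r$, we get $x^{(i)} - y = A^{-1}b - A^{-1}\delta_r - (y - A^{-1}r)$, and subtracting $A^{-1}b$ yields the identity $e^{(i+1)} = -A^{-1}\delta_r - (y - A^{-1}r) + \delta_u$, hence $\|e^{(i+1)}\|_A \le \|A^{-1}\delta_r\|_A + \rinner\|A^{-1}r\|_A + \|\delta_u\|_A$ with $\|A^{-1}r\|_A \le \|e^{(i)}\|_A + \|A^{-1}\delta_r\|_A$ and $\|x^{(i)} - y\|_A \le \|A^{-1}b\|_A + (1+\rinner)\|A^{-1}\delta_r\|_A + \rinner\|e^{(i)}\|_A$.

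The heart of the argument is bounding $\|A^{-1}\delta_r\|_A$ and $\|\delta_u\|_A$ in terms of $\|e^{(i)}\|_A$ and $\|A^{-1}b\|_A$ and matching the constants in (\ref{chi}). For the residual, the value $\hat r$ computed in $\eweb$-precision satisfies $|\hat r - (Ax^{(i)} - b)| \le \alabar\eweb(|A||x^{(i)}| + |b|)$ (the usual $(m_A+1)$-term inner-product-plus-subtraction bound, since $\alabar\eweb = (m_A+1)\eweb/(1 - (m_A+1)\eweb)$), and rounding to $\ewe$-precision adds $|r - \hat r| \le \ewe|\hat r|$; combining these bounds, crucially \emph{in this order} so that $\|r\|$ is controlled through $\|\hat r\|$ rather than conversely and no spurious $1/(1-\cdot)$ factor appears, gives $|\delta_r| \le \ewe|Ae^{(i)}| + (1+\ewe)\alabar\eweb(|A||x^{(i)}| + |b|)$. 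Passing to the Euclidean norm with $\||z|\| = \|z\|$ and $\||A|\| = \A$, then using the norm identities for SPD $A$, namely $\|A^{-1}\delta_r\|_A = \|A^{-1/2}\delta_r\| \le \|A^{-1}\|^{1/2}\|\delta_r\|$, $\|Ae^{(i)}\| \le \|A\|^{1/2}\|e^{(i)}\|_A$, $\|b\| \le \|A\|^{1/2}\|A^{-1}b\|_A$, and $\|x^{(i)}\| \le \|A^{-1}\|^{1/2}(\|e^{(i)}\|_A + \|A^{-1}b\|_A)$, the $\ewe$-term collapses to $\pvar = \kappa^{1/2}(A)\ewe$ and the $\eweb$-term to $\pvarb = \kappa(A)\eweb$ once one substitutes $\kap + \kappa^{1/2}(A) = \gmm\kappa(A)$ and bounds $\kap \le \gmm\kappa(A)$; the outcome is $\|A^{-1}\delta_r\|_A \le (\pvar + Q)\|e^{(i)}\|_A + Q\|A^{-1}b\|_A$ with $Q := (1+\ewe)\alabar\gmm\pvarb$. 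For the update, $\|\delta_u\|_A \le \|A\|^{1/2}\ewe\|x^{(i)} - y\| \le \pvar\|x^{(i)} - y\|_A$, and the bound on $\|x^{(i)} - y\|_A$ above finishes it.

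Substituting everything back, $\|e^{(i+1)}\|_A$ is dominated by $(1+\rinner)(1+\pvar)\|A^{-1}\delta_r\|_A + \rinner(1+\pvar)\|e^{(i)}\|_A + \pvar\|A^{-1}b\|_A$; inserting the bound for $\|A^{-1}\delta_r\|_A$, the coefficient of $\|e^{(i)}\|_A$ becomes $(1+\pvar)[(1+\rinner)(\pvar+Q) + \rinner]$, and a short expansion of $\pir$ from (\ref{chi}) shows $(1+\rinner)(\pvar+Q) + \rinner = \rinner + (1+\rinner)\pvar + \gmm(1+\rinner)(1+\ewe)\alabar\pvarb = (1-\pvar)\rir$, so this coefficient equals $(1-\pvar^2)\rir \le \rir$; similarly the coefficient of $\|A^{-1}b\|_A$ is $(1+\rinner)(1+\pvar)Q + \pvar \le [\pvar + \gmm(1+\rinner)(1+\ewe)\alabar\pvarb]/(1-\pvar) = \thresh$. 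Both of these last inequalities reduce to the elementary facts $(1+\pvar)(1-\pvar) \le 1$ and $\pvar \le \pvar/(1-\pvar)$ for $0 < \pvar < 1$. That gives (\ref{ec}); dividing the scalar recursion by $\|A^{-1}b\|_A$ and unrolling $\I$ times produces (\ref{Kbnd}). I expect the bookkeeping in the third step to be the main obstacle: keeping the residual perturbation clean (ordering the two rounding estimates correctly) and marshalling the various $(1+\ewe)$-, $\pvar$-, and $\kappa$-factors so that they collapse to precisely the closed forms (\ref{chi}) rather than something slightly larger.
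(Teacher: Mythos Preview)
Your argument is correct and follows the natural line of attack; in particular your residual-perturbation bound $|\delta_r| \le \ewe|Ae^{(i)}| + (1+\ewe)\alabar\eweb(|b| + |A|\cdot|x^{(i)}|)$ is exactly the estimate labeled ``(13)'' from \cite{McCormick2020} that this paper quotes verbatim in the proof of Theorem~\ref{quant-ir}, and your algebraic collapse to $(1-\pvar)\rir$ and $\thresh$ checks out. Note, however, that the present paper does \emph{not} itself prove Theorem~\ref{theorem2}: it is stated here only as a summary result imported from the companion paper \cite{McCormick2020}, so there is no in-paper proof to compare against---but the evidence from Theorem~\ref{quant-ir} indicates that your route coincides with the original.
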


For the inner solver, we use one V$(1, 0)$-cycle, with pseudocode {\V } given in Algorithm~\ref{alg-mg} below. {\V } uses a nested hierarchy of $\ell$ grids from the coarsest $j = 1$ to the finest $j = \ell$, $1 \le j \le \ell$. It begins on the finest grid and proceeds down to the coarsest grid, with one relaxation sweep on each level along the way.  Each level is equipped with a system matrix $A_j$, with $A_\ell = A$. Assume that relaxation on grid $j$ applied to $A_jy_j = r_j$ is the stationary linear iteration $y_j \leftarrow y_j - \m_j (A_jy_j - b_j)$, where $M_j$ roughly approximates $A_j^{-1}$. Let $P_j$ denote the interpolation matrix that maps from grid $j - 1$ to grid $j$ with at most $m_{P_j}$ nonzeros per row or column. Let $P_1 = 0$ for the coarsest grid, which involves just one relaxation sweep and no further coarsening. Assume further that the  Galerkin condition is exactly satisfied on all coarse levels: $A_{j - 1} = P_j^tA_jP_j, \, 2 \le j \le \ell.$ (See Section~\ref{galerkin} for analysis of the rounding-error effects when this relationship is used to compute the coarse-grid matrices in finite precision.) All computations in {\V } are performed in low $\ewed$-precision as shown in green font in the  pseudocode. Accordingly, since the input right-hand side (RHS) may be in higher precision, the cycle is initialized with a rounding step.

\noindent
\begin{center}
\begin{minipage}{1.0\linewidth}
\begin{algorithm}[H]
\footnotesize
\caption{$\texttt{V$(1,0)$-Cycle (\V) Correction Scheme}$}
\label{alg-mg}
\begin{algorithmic}[1]
\Require A, r, P, $\ell \ge 1$ {\V } levels.
\State $\textcolor{lo}{r \leftarrow} r$\COMMENT{\textcolor{lo}{Round RHS} and Initialize {\V }}
\State \textcolor{lo}{$y \leftarrow Mr$}\COMMENT{\textcolor{lo}{Relax on Current Approximation}}
\If {$\ell > 1$}  \COMMENT{Check for Coarser Grid}
\State \textcolor{lo}{$r_{\textrm{v}} \leftarrow Ay - r$}\COMMENT{\textcolor{lo}{Evaluate {\V } Residual}}
\State \textcolor{lo}{$r_{\ell - 1} \leftarrow P^tr_{\textrm{v}}$}\COMMENT{\textcolor{lo}{Restrict {\V } Residual to Coarse-Grid}}
\State \textcolor{lo}{$d_{\ell - 1} \leftarrow $\V$(A_{\ell - 1}, r_{\ell - 1}, P_{\ell - 1}, \ell - 1)$}\COMMENT{\textcolor{lo}{Compute Correction from Coarser Grids}}
\State \textcolor{lo}{$d \leftarrow Pd_{\ell - 1}$}\COMMENT{\textcolor{lo}{Interpolate Correction to Fine Grid}}
\State \textcolor{lo}{$y \leftarrow y - d$}\COMMENT{\textcolor{lo}{Update Approximate Solution of $Ay = r$}}
\EndIf
\State\Return $y$\COMMENT{Return Approximate Solution of $Ay = r$}
\end{algorithmic}
\end{algorithm}
\end{minipage}
\end{center}
\vspace{1em}

The theory in \cite{MandelMcCormickBank1987} and the references cited therein establish optimal energy convergence in infinite precision of Algorithm~\ref{alg-mg} under fairly general conditions for fully regular elliptic PDEs discretized by standard finite elements. We simply assume this to be the case by supposing that the error propagation matrix $V_j$ for level $j$ is bounded by a constant $\rvstar \in [0, 1)$ for all $j$, that is, $\|V_j\|_{A_j} \le \rvstar,  1 \le j \le \ell .$

One aim of this paper is to verify the theory in \cite{McCormick2020} for multigrid applied to a large class of PDEs, including the model problem introduced below. Accordingly, we have in mind matrices whose condition numbers depend on the mesh size, $h$. (While we do not explicitly exclude coarsening in terms of the degree, $p$, of the discretization, our focus is on coarsening in terms of $h$.) To abstract this dependence, define the {\em pseudo mesh size} by $h_j = \kappa^{-\frac{1}{2\emm}} (A_j), \, 1 \le j \le \ell$, where $\emm$ is a positive integer, and the {\em mesh coarsening factor} by $\theta_j =  \frac{h_{j - 1}}{h_j} , \quad 2 \le j \le \ell .$
In the geometric setting, $2\emm$ correspond to the order of the PDE. Under standard assumptions for finite element discretizations, classical theory shows that the condition number on a given grid is bounded by a constant (depending on the finite element approximation order) times $h_{min}^{-2m}$, where $h_{min}$ is the smallest element size on that grid (see \cite[Sec.~5.2]{Strang2008}). In this case, $h_j$ is therefore bounded by that constant times the grid $j$ mesh size.

To allow for a {\em progressive-precision} V-cycle, where precision is tailored to each grid in the hierarchy, assume now that $\ewed$ varies by letting $\ewed_j$ denote the unit roundoff used on level $j, \, 1 \le j \le \ell$. We use similar notation for other parameters that may now depend on the grid level, but suppress the subscript when the level is understood. Specifically, $\ewed_j$-precision is used on level $j$ to store the data, perform relaxation, transfer residuals to level $j-1$ and corrections to level $j + 1$, and round residuals transferred from level $j + 1$.  Define the {\em precision coarsening factor} by
$\dot{\zeta}_j = \frac{\ewed_{j - 1}}{\ewed_j}, \, 2 \le j \le \ell.$
 We will revisit the actual value of $\dot{\zeta}_j$ in Section~\ref{sec:precision}. To accommodate the use of a geometric series involving the rounding-error effects on each level of the V-cycle, denote the {\em coarsening ratio} by $\cratio  = \min_{1 \le j \le \ell} \{\theta_j \zeta_j^{-\frac{1}{m}}\}$ and assume that $\cratio > 1$. To account for rounding errors in relaxation, suppose that a constant $\alpha_{M_j}$ exists such that computing $\m_j z_j$ for any vector $z_j$ on level $j$ in $\ewed_j$-precision yields
\begin{equation}
\m_j  z + \delta_{M_j}, \qquad \|\delta_{M_j}\| \le \alpha_{M_j} \ewed_j \|z_j\|, \, 1 \le j \le \ell.
\label{mest}
\end{equation}
For example, Richardson iteration with $M_j = \frac{\omega}{\|A_j\|},\, 0<\omega<2$, yields $\alpha_{M_j} < \frac{2}{\|A_j\|}$ (see \cite{McCormick2020}). Assume further that relaxation is monotonically convergent in energy: $\|I_j - M_j A_j\|_A < 1$.
To simplify what follows, assume that $\cc$ is a constant such that
\[\cc \ge (1 + \ewed_j) \max \{ \alpha_{M_j} \|A_j\|, \||A_j|\| \alpha_{M_j} , \||A_j|\| \cdot \|M_j\|\}\}, \, 1 \le j \le \ell.
\]
Only the low precision varies by level in the V-cycle because its finest level is fixed. On the other hand, the full multigrid algorithm introduced below uses progressively finer grids for its inner-loop V-cycles. We therefore introduce variable $\ewe_j$ and $\eweb_j$, $1 \le j \le \ell$, for this purpose, where $\ell$ is now the very finest level used in FMG. Finally, we redefine the following parameters to mean their maxima over all levels:
\begin{align*}
\kappa (P^tP) &= \max_{1 \le j \le \ell} \kappa (P_j^tP_j), \quad \aladot = \max_{1 \le j \le \ell}\frac{m_{A_j}}{1 - m_{A_j} \ewed_j},\quad \alp = \max_{1 \le j \le \ell}\frac{m_{P_j}}{1 - m_{P_j} \ewed_j},
\\
& \alabar = \max_{1 \le j \le \ell}\frac{m_{A_j}}{1 - m_{A_j} \eweb_j}, \quad \textrm{and}\quad \alpnodot = \max_{1 \le j \le \ell}\frac{m_{P_j}}{1 - m_{P_j} \ewe_j}.
\end{align*}

The next theorem confirms that {\V } reduces the error optimally toward the solution of the target matrix equation $Ay = r$ provided that the perturbation $\pv$ of the exact convergence factor satisfies $\pv < 1 - \rvstar$, meaning that coarsening in the grid hierarchy should be fast enough (i.e., large enough $\theta_j$) and progression of the precision should be slow enough (i.e., small enough $\dot{\zeta}_j$) to ensure that $\cratio \gg 1$. More significantly, it requires the finest-level scale parameter to satisfy $\pvard \ll 1$, which in turn means that $\kappa(A) \ll \ewed^{-2}$. Together with Theorem~\ref{theorem2}, we can then conclude that the mixed-precision version of {\ir } with a {\V } as the inner loop converges optimally to the solution of (\ref{axb}) until to the order of the lower limit $\thresh$ is reached.

\begin{thm}{\em \V.  \cite{McCormick2020}}
Define the following quadratic polynomial in $\pvard_j$:
\begin{equation}
\pv(\pvard_j) = \frac{\cratio^\emm}{\cratio^\emm - 1}\left(4\pvard_j + (2 + \bet) \ead_j + \hot_j \right),
\label{rh-v}
\end{equation}
where $\hot_j = 2\pvard^2_j + (4 + \bet)\ead_j \pvard_j + 2 \ead_j \pvard^2_j,
\ead_j = 3 \dot{\zeta}_j\kappa^\frac{1}{2}(P^tP)  \alp \pvard_j,$ and $\bet = 2 + 3 \cc + 2\aladot (1 + \cc)$.
If $\pvard_j$ is small enough that $\pv(\pvard_j) < 1 - \rvstar, \, 1 \le j \le \ell$, then one cycle of the progressive-precision version of Algorithm~\ref{alg-mg} for solving the {\ir } residual equation converges according to $\|y - A^{-1}r\|_A \le \rv \|A^{-1}r\|_A, \, \rv = \rvstar + \pv \, (\pv = \pv(\pvard_\ell))$.
\label{theorem3}
\end{thm}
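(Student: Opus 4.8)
The plan is to prove the bound by induction on the number of grid levels, stripping one pass through Algorithm~\ref{alg-mg} off the finest level at a time and tracking how the rounding errors accumulate. For the $j$-level cycle with (unrounded) input $r_j$ — which line~1 of Algorithm~\ref{alg-mg} first rounds to $\ewed_j$-precision — compare its computed output $\tilde d_j$ with the output $\hat d_j$ of the \emph{exact} (infinite-precision) $j$-level cycle on $r_j$. Since that cycle is a correction scheme started from the zero vector, the hypothesis $\|V_j\|_{A_j}\le\rvstar$ gives $\|\hat d_j - A_j^{-1}r_j\|_{A_j}\le\rvstar\|A_j^{-1}r_j\|_{A_j}$. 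The induction produces a small nondecreasing sequence $\epsilon_1\le\cdots\le\epsilon_\ell$ with $\|\tilde d_j-\hat d_j\|_{A_j}\le\epsilon_j\|A_j^{-1}r_j\|_{A_j}$, so that $\rho_j:=\rvstar+\epsilon_j$ gives the target relative bound at level $j$; the goal is $\epsilon_\ell\le\pv(\pvard_\ell)$, whence the claimed estimate for $y=\tilde d_\ell$ with $r_\ell=r$ is the $j=\ell$ instance, $\rho_\ell\le\rv$. The base case is immediate: level~$1$ does a single relaxation from zero, $\tilde d_1=\fl(M_1\,\fl(\fl(r_1)))$, so by (\ref{mest}) and the input-rounding bound $\tilde d_1=M_1 r_1+\delta$ with $\|\delta\|$ of order $\ewed_1(\alpha_{M_1}+\cc)\|r_1\|$; since $\hat d_1=M_1 r_1$ and $\|I-M_1A_1\|_{A_1}=\|V_1\|_{A_1}\le\rvstar$, converting $\|\delta\|$ to the energy norm through $\kappa^{\frac12}(A_1)$ yields $\epsilon_1\le c\,\pvard_1\le\pv(\pvard_1)$.

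For the inductive step I would walk through the level-$j$ body line by line: the input rounding and relaxation are controlled by (\ref{mest}) and $\alpha_{M_j}\ewed_j$; the fine-grid residual $\fl(A_j\tilde y-\fl(r_j))$ by $\aladot$, $\cc$ and $\ewed_j$ (this is where $\bet=2+3\cc+2\aladot(1+\cc)$ is assembled); the restriction $\fl(P_j^t\tilde r_{\mathrm v})$ and interpolation $\fl(P_j\tilde d_{j-1})$ by $\alp$ and $\ewed_j$; and the update by one rounded subtraction. The key algebraic move is the decomposition
\[
\tilde d_j-\hat d_j = P_j\big(\tilde d_{j-1}-\hat d_{j-1}'\big) + P_j(I-V_{j-1})A_{j-1}^{-1}\big(\tilde r_{j-1}-\hat r_{j-1}\big) + \delta_j^{\mathrm{round}},
\]
where $\hat d_{j-1}'$ is the exact $(j-1)$-level cycle applied to the \emph{computed} restricted residual $\tilde r_{j-1}$, $\hat r_{j-1}$ is the exact restriction of the exact fine-grid residual, and $\delta_j^{\mathrm{round}}$ gathers the rounding in relaxation, residual evaluation, interpolation and update on level $j$. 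The first term equals $\|\tilde d_{j-1}-\hat d_{j-1}'\|_{A_{j-1}}\le\epsilon_{j-1}\|A_{j-1}^{-1}\tilde r_{j-1}\|_{A_{j-1}}$ by the induction hypothesis, re-expressed in the level-$j$ energy norm via the Galerkin identity $A_{j-1}=P_j^tA_jP_j$ (so $\|P_jw\|_{A_j}=\|w\|_{A_{j-1}}$ and $P_jA_{j-1}^{-1}P_j^tA_j$ is an $A_j$-orthogonal projector), with $\kappa(P^tP)$ absorbing the mismatch between $\|\tilde r_{j-1}\|$ and $\|P_j^t\,\cdot\,\|$; the second term is pure level-$j$ rounding amplified by $\|I-V_{j-1}\|_{A_{j-1}}<1+\rvstar$; and $\delta_j^{\mathrm{round}}$ is pushed from the Euclidean into the energy norm with $\kappa^{\frac12}(A_j)$. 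After simplification these produce the linear contributions $4\pvard_j$ and $(2+\bet)\ead_j$, with $\ead_j=3\dot\zeta_j\kappa^{\frac12}(P^tP)\alp\pvard_j$ (the factor $\dot\zeta_j=\ewed_{j-1}/\ewed_j$ enters precisely when level-$(j-1)$ rounding acting on the transferred quantities is rewritten in level-$j$ units), and every product of two small quantities lands in the remainder $\hot_j=2\pvard_j^2+(4+\bet)\ead_j\pvard_j+2\ead_j\pvard_j^2$.

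To close the recursion, the inductive step gives $\epsilon_j\le(1+\bigO(\pvard_j))\,\epsilon_{j-1}+\big(4\pvard_j+(2+\bet)\ead_j+\hot_j\big)$, and the coarsening hypotheses are exactly what make the level-$j$ contribution geometrically smaller than the level-$\ell$ one: from $h_j=\kappa^{-\frac{1}{2\emm}}(A_j)$, $\theta_j=h_{j-1}/h_j$, $\dot\zeta_j=\ewed_{j-1}/\ewed_j$ and $\cratio>1$ one gets $\pvard_j\le\cratio^{-\emm(\ell-j)}\pvard_\ell$, hence the same decay for $\ead_j$ and $\hot_j$. Unrolling from $j=\ell$ down to $j=1$ and summing the geometric series $\sum_{i\ge0}\cratio^{-\emm i}=\frac{\cratio^\emm}{\cratio^\emm-1}$ (the $(1+\bigO(\pvard_j))$ factors being absorbed into $\hot_\ell$) collapses the recursion to $\epsilon_\ell\le\frac{\cratio^\emm}{\cratio^\emm-1}\big(4\pvard_\ell+(2+\bet)\ead_\ell+\hot_\ell\big)=\pv(\pvard_\ell)$, so $\rho_\ell=\rvstar+\epsilon_\ell\le\rvstar+\pv=\rv$. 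The hypothesis $\pv(\pvard_j)<1-\rvstar$ for every $j$ is what guarantees $\rho_j<1$ throughout, and $\pvard_j\ll1$ keeps the denominators $1-m_{A_j}\ewed_j$, $1-m_{P_j}\ewed_j$ and $1-\pvard_j$ positive so that all the $\alpha$-parameters and the norm conversions are legitimate.

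The main obstacle is the bookkeeping in the inductive step: one must introduce the auxiliary reference solution $\hat d_{j-1}'$ (the exact coarse cycle on the \emph{computed} coarse residual) in order to cleanly separate the coarse cycle's own rounding from the effect of the perturbed coarse right-hand side, transfer the induction hypothesis from the level-$(j-1)$ to the level-$j$ energy norm through the Galerkin relation without forfeiting the geometric damping factor $\cratio^{-\emm}$, and keep the numerous cross terms organized so that they all land in the higher-order remainder $\hot_j$ rather than contaminating the leading-order estimate $4\pvard_j+(2+\bet)\ead_j$.
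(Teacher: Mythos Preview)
This theorem is not proved in the present paper: it is quoted verbatim from the companion paper \cite{McCormick2020} as part of the ``Existing Theory'' summary in Section~\ref{sec:fp}, so there is no proof here to compare your proposal against. That said, the only structural hint the present paper gives about the original argument is in the proof of Theorem~\ref{quant-v}, which refers to ``bound~(37) in the proof of Theorem~2 of \cite{McCormick2020}'' for the residual step $r^{(1/2)}=Ay-r+\delta_1$ with $|\delta_1|\le\aladot\ewed(|r|+|A|\cdot|y|)$; your line-by-line treatment of the residual evaluation, and your identification of $\bet$ as the constant assembled at that step, are consistent with this.

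Your overall plan---induction on the level index, splitting $\tilde d_j-\hat d_j$ via an auxiliary exact coarse cycle $\hat d_{j-1}'$ on the computed coarse residual, transferring energy norms through the Galerkin identity $\|P_jw\|_{A_j}=\|w\|_{A_{j-1}}$, and collapsing the level-wise contributions through the geometric series $\sum_{i\ge0}\cratio^{-\emm i}=\cratio^\emm/(\cratio^\emm-1)$---is the natural route for results of this form and accounts correctly for where each named constant ($\bet$, $\ead_j$, $\hot_j$, the prefactor $\cratio^\emm/(\cratio^\emm-1)$) enters. One point to be careful with: your recursion $\epsilon_j\le(1+\bigO(\pvard_j))\epsilon_{j-1}+\cdots$ needs the induction hypothesis applied with right-hand side $\tilde r_{j-1}$, and you must then bound $\|A_{j-1}^{-1}\tilde r_{j-1}\|_{A_{j-1}}$ back in terms of $\|A_j^{-1}r_j\|_{A_j}$ (via the $A_j$-orthogonal projector $P_jA_{j-1}^{-1}P_j^tA_j$ and the contractivity of relaxation) without losing a level-independent constant; otherwise the product $\prod_j(1+\bigO(\pvard_j))$ will not be absorbable into $\hot_\ell$ and the geometric sum will not close at exactly $\pv(\pvard_\ell)$. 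This is the ``main obstacle'' you already flag, and it is genuine, but it is bookkeeping rather than a conceptual gap.
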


Full multigrid uses a special cycling scheme that targets the underlying PDE, with the aim of attaining accuracy comparable to how well the finest-grid solution approximates the PDE solution. FMG starts on the coarsest grid and proceeds to the finest, making sure that enough V-cycles are used on each grid along the way to achieve accuracy comparable to that grid's discretization accuracy. In essence, if grid $j - 1$ is solved to within the discretization error $Ch_{j - 1}^q$ for some positive constants $C$ and $q $, then using that result as an initial guess on grid $j$ means that the initial error on grid $j$ is bounded by some small multiple (depending on $\theta_j$) of $Ch_j^q$. This in turn means that only a few V-cycles are needed to obtain discretization accuracy on grid $j$ (i.e., error below $Ch_j^q$). For standard finite elements $q=k-m$ where $2m$ and $k > m$ correspond to the order of the PDE and the order of the finite elements (e.g., polynomials of degree $p=k-1$), respectively \cite[Sec. 2.2]{Strang2008}. 

The full multigrid algorithm based on $\I \ge 1$ inner {\ir } cycles each using one {\V } is given below by the pseudocode {\fmg }. Note that {\fmg}  amounts to three nested loops: outer \fmg, middle \ir, and inner \V. The choice of $\I$ is critical because it must guarantee convergence to within discretization accuracy on each level. The goal of Section~\ref{coarsening} is to determine $\I$ in the presence of rounding errors.

\noindent
\begin{center}
\begin{minipage}{0.97\linewidth}
\begin{algorithm}[H]
\footnotesize
\caption{$\texttt{FMG$(1,0)$-Cycle (\fmg)}$}
\label{alg-fmg}
\begin{algorithmic}[1]
\Require A, b, P, $\I \ge 1$ {\ir } cycles (using one V$(1,0)$ each), $\ell \ge 1$ {\fmg } levels.
\State $x \leftarrow 0$\COMMENT{Initialize {\fmg } }
\If {$\ell > 1$}  \COMMENT{Check for Coarser Grid}
\State \textcolor{med}{$x_{\ell - 1} \leftarrow $\fmg$(A_{\ell - 1}, b_{\ell - 1}, P_{\ell - 1}, \ell - 1, \I)$}\COMMENT{\textcolor{med}{Compute Coarse-Grid Approximation}}
\State \textcolor{med}{$x \leftarrow P x_{\ell - 1}$}\COMMENT{\textcolor{med}{Interpolate Approximation to Fine Grid}}
\EndIf
\State $i \leftarrow 0$\COMMENT{Initialize {\ir } }
\While {$i < \I$}
\State \textcolor{med}{$r \leftarrow$}\textcolor{red}{$ Ax - b$} \COMMENT{\textcolor{red}{Update {\ir } Residual} and \textcolor{med}{Round}}
\State \textcolor{lo}{$y \leftarrow $\V$(A, r, P, \ell)$}\COMMENT{\textcolor{lo}{Compute Correction by \V}}
\State $i \leftarrow i + 1$\COMMENT{Increment {\ir } Cycle Counter}
\State \textcolor{med}{$x \leftarrow x - y$}\COMMENT{\textcolor{med}{Update Approximate Solution of $Ax = b$}}
\EndWhile
\State\Return $x$\COMMENT{Return Approximate Solution of $Ax = b$}
\end{algorithmic}
\end{algorithm}
\end{minipage}
\end{center}
\vspace{1em}

To obtain an abstract sense of discretization accuracy, assume that $b_{j - 1} = P_j^tb_j$, $2 \le j \le \ell$, are also computed exactly. We characterize the relative accuracy of adjacent levels in the grid hierarchy by assuming that $C$ is a positive constant such that the following {\em strong approximation property (SAP)} holds:
\begin{equation}
\|P_j A_{j - 1}^{-1}b_{j - 1} - A_j^{-1}b_j\|_{A_j} \le C h_{j - 1}^q \|A_j^{-1}b_j\|_{A_j}, \quad 2 \le j \le \ell , \quad q = k - m,
\label{C}
\end{equation}
($C$ and $h_j$ may depend on $k$, but we assume that this order is fixed in what follows.)
While (\ref{C}) characterizes the relative error in a coarse-grid solution with respect to the next finer grid, it also suggests the following definition. We say that $x_j$ solves $A_jx_j =b_j$ {\em to the order of discretization error} or simply {\em to discretization accuracy} if
\begin{equation}
\|x_j - A_j^{-1}b_j\|_{A_j} \le C h_j^q  \|A_j^{-1}b_j\|_{A_j} , \quad 1 \le j \le \ell.
\label{disc}
\end{equation}
We assume that this level of approximation is achieved on the coarsest level $j=1$ by just a few relaxation sweeps starting with a zero initial guess.

\begin{thm}{\em \fmg. \cite{McCormick2020}}
Assume that $\rh_v + \pir < 1$ and that $\thresh$ is small enough and $\I$ is large enough that the following  holds on all levels $j \in \{1, 2,\dots, \ell\}$:
\begin{equation}
(\rh_v + \pir)^\I \left((\sqrt{2} + \ea) \theta^q C h^q +  \ea\right) + \frac{\thresh}{1 -(\rh_v + \pir)} \le C h^q ,
\label{mess}
\end{equation}
where $h = h_j$, $\theta = \theta_j$, $\ea = \ea_{j} = 3 \dot{\zeta}_j\kappa^\frac{1}{2}(P^tP)  \alpnodot \pvar_j$, and (with subscript $j$ understood) the parameters $\rh_v$, $\pir$, and $\thresh$ are given by (\ref{rh-v}), (\ref{ec}),  and (\ref{chi}), respectively. Then Algorithm~\ref{alg-fmg} solves (\ref{axb}) to the order of discretization error on each level.
\label{theorem4}
\end{thm}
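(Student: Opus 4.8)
The plan is to argue by induction on the grid level $j$, ascending from the coarsest level $j=1$ to the finest $j=\ell$ in Algorithm~\ref{alg-fmg}, with inductive claim that the iterate $x_j$ returned by the level-$j$ recursion satisfies the discretization-accuracy estimate (\ref{disc}). The base case $j=1$ holds by hypothesis, since we assume that a few relaxation sweeps from a zero initial guess bring the coarsest iterate to within (\ref{disc}). For the inductive step, assume (\ref{disc}) on level $j-1$ for the computed output $x_{j-1}$; then I must (i) bound the error of the initial guess $x^{(0)}$ that {\fmg} passes to the inner {\ir} loop on level $j$, which is the finite-precision realization of $P_j x_{j-1}$, and (ii) propagate that error through the $\I$ {\ir} cycles on level $j$.

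For step (i), write $z_{j-1} = A_{j-1}^{-1}b_{j-1}$ and split, in exact arithmetic,
\[
P_j x_{j-1} - A_j^{-1}b_j = P_j(x_{j-1} - z_{j-1}) + \bigl(P_j z_{j-1} - A_j^{-1}b_j\bigr).
\]
The crucial observation is that the two summands are $A_j$-orthogonal: their $A_j$-inner product equals $(x_{j-1}-z_{j-1})^t(P_j^tA_jP_jz_{j-1} - P_j^tb_j) = (x_{j-1}-z_{j-1})^t(A_{j-1}z_{j-1} - b_{j-1}) = 0$, using the Galerkin identity $A_{j-1}=P_j^tA_jP_j$ and $b_{j-1}=P_j^tb_j$. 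Hence $\|P_j x_{j-1} - A_j^{-1}b_j\|_{A_j}^2$ is the sum of the squared $A_j$-norms of the two pieces. The second piece is bounded directly by the strong approximation property (\ref{C}) by $Ch_{j-1}^q\|A_j^{-1}b_j\|_{A_j}$. For the first piece, the Galerkin identity gives $\|P_j(x_{j-1}-z_{j-1})\|_{A_j} = \|x_{j-1}-z_{j-1}\|_{A_{j-1}}$, which by the inductive hypothesis (\ref{disc}) at level $j-1$ is at most $Ch_{j-1}^q\|z_{j-1}\|_{A_{j-1}}$, and $\|z_{j-1}\|_{A_{j-1}} = \|P_j z_{j-1}\|_{A_j} \le (1 + Ch_{j-1}^q)\|A_j^{-1}b_j\|_{A_j}$ again by (\ref{C}). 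Combining the two squared bounds by Pythagoras and recalling $h_{j-1} = \theta_j h_j$ produces the factor $\sqrt{(1+Ch_{j-1}^q)^2 + 1} = \sqrt{2} + \bigO(h_{j-1}^q)$, so the exact interpolant $P_jx_{j-1}$ has relative error at most $\bigl(\sqrt{2} + \bigO(h^q)\bigr)\theta^q Ch^q$. Finally, forming $P_jx_{j-1}$ in $\ewe_j$-precision adds a relative error of order $\mu_j = 3\dot\zeta_j\kappa^{\frac12}(P^tP)\,\alpnodot\,\pvar_j$ (the computed-interpolation estimate of \cite{McCormick2020}); absorbing the higher-order pieces into the $\mu$'s gives $\|x^{(0)} - A_j^{-1}b_j\|_{A_j} \le \bigl((\sqrt{2}+\mu)\theta^q Ch^q + \mu\bigr)\|A_j^{-1}b_j\|_{A_j}$, with subscript $j$ understood.

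For step (ii), the inner solver of {\ir} on level $j$ is a single progressive-precision {\V}-cycle, so Theorem~\ref{theorem3} — applicable because the standing assumption $\rv + \pir < 1$ forces $\pv(\pvard_j) < 1 - \rvstar$ — supplies exactly the contraction $\|y - A_j^{-1}r\|_{A_j} \le \rv\|A_j^{-1}r\|_{A_j}$ required by Theorem~\ref{theorem2} with $\rinner = \rv$. Invoking Theorem~\ref{theorem2} with this inner solver, $\I$ cycles, and the initial-guess bound from step (i), estimate (\ref{Kbnd}) yields
\[
\frac{\|x^{(\I)} - A_j^{-1}b_j\|_{A_j}}{\|A_j^{-1}b_j\|_{A_j}} \le (\rv + \pir)^\I\bigl((\sqrt{2}+\mu)\theta^q Ch^q + \mu\bigr) + \frac{\thresh}{1-(\rv + \pir)},
\]
and the standing hypothesis (\ref{mess}) says precisely that the right-hand side is at most $Ch_j^q$. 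Hence $x_j := x^{(\I)}$ satisfies (\ref{disc}) on level $j$, closing the induction; the case $j=\ell$ is the assertion.

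I expect the main obstacle to be the careful bookkeeping in step (i): the vector handed to {\ir} is the finite-precision value of $P_jx_{j-1}$, and $x_{j-1}$ is itself only a computed quantity known to meet (\ref{disc}), so one must compose the algebraic error (inductive hypothesis), the two-level approximation error (\ref{C}), and the interpolation rounding error, and verify that every cross term and squared term is genuinely higher order and can be hidden inside the $\mu$'s without disturbing the clean $\sqrt{2}$ constant. A secondary point is confirming that the hypotheses of Theorems~\ref{theorem2} and~\ref{theorem3} hold uniformly over the levels, which follows from the standing hypotheses of the present theorem (the relevant scale parameter $\pvard_j$ being worst on the finest level).
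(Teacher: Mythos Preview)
Your proof is correct and follows the same approach that the paper attributes to \cite{McCormick2020}. Note that this paper does not itself prove Theorem~\ref{theorem4}; it is quoted from the companion paper, and the only in-text evidence of the original argument is the proof of the quantization extension (the {\fmg} Quantization Errors theorem), which proceeds by the same induction-on-level, applies the SAP and the Galerkin identity to bound the interpolated initial guess, and then ``invoke[s] Theorems~\ref{theorem2} and~\ref{theorem3}'' exactly as you do. One small sharpening: the paper records (as an estimate from \cite{McCormick2020}) the direct bound $\|A_{j-1}^{-1}b_{j-1}\|_{A_{j-1}} \le \|A_j^{-1}b_j\|_{A_j}$, which follows because $P_jA_{j-1}^{-1}P_j^t b_j$ is the $A_j$-orthogonal projection of $A_j^{-1}b_j$ onto the range of $P_j$; using this in place of your triangle-inequality bound $(1+Ch_{j-1}^q)\|A_j^{-1}b_j\|_{A_j}$ gives the clean $\sqrt{2}$ in the Pythagorean step without any higher-order residue to absorb into $\mu$.
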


\section{Effects of Quantization \& Galerkin Construction on {\V } \& {\ir}}
\label{galerkin}

The components $A_j, P_j$, and $b_j$ have so far been assumed to be exact for all $j$. In this and the next section, we extend the theory from \cite{McCormick2020} to include \emph{quantization errors} incurred from simply storing the components in finite precision. This is in addition to the \emph{algebraic errors} accumulated during computations and already accounted for in the existing theory.

Dropping subscript $j$, assume that the system matrices are stored in symmetric form and that $\check{A} = A + \Delta$ and $\check{b} = b + \delta$ result from simply rounding the exact $A$ and $b$, respectively, to some $\eweq$-precision. The actual value of $\eweq$ will be determined later. We first obtain the general result that $(A + \Delta)^{-1} b \approx A^{-1} b$ and $\kappa(A + \Delta) \approx \kappa(A)$ to the extent that $\kap \eweq < 1$.

\begin{thm}{\em $A$ and $b$ Quantization Errors.}
If $\kap \eweq < 1$, then $A + \Delta$ is SPD and $(A + \Delta)^{-1}(b + \delta)$ approximates $A^{-1}b$ with relative error bounded according to
\begin{equation}
\frac{\|(A + \Delta)^{-1}(b + \delta) - A^{-1}b\|_A}{\|A^{-1}b\|_A} \le \phi\eweq, \quad \phi = \frac{\kap + \kappa^\frac{1}{2}(A)}{1 - \kap \eweq}.
\label{relerr}
\end{equation}
\label{abround}
\end{thm}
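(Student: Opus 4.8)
The plan is to handle the two assertions in turn: first that $\check A = A+\Delta$ is SPD, then the relative error estimate, both obtained from elementary perturbation arguments once the rounding error is converted into norm bounds. Since $\check A$ and $\check b$ result from entrywise rounding of $A$ and $b$ to $\eweq$-precision and $A$ is stored symmetrically, $\Delta := \check A - A$ is symmetric with $|\Delta| \le \eweq |A|$ componentwise and $\delta := \check b - b$ satisfies $|\delta| \le \eweq |b|$. Using $\||z|\| = \|z\|$ for vectors and the fact that $\|B\| \le \||B|\| \le \|C\|$ whenever $0 \le |B| \le C$, these componentwise bounds translate into $\|\Delta\| \le \eweq\A$ and $\|\delta\| \le \eweq\|b\|$; these are the only facts about the rounding I will use.

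For the SPD claim, note that $A+\Delta$ is symmetric and that for any unit vector $u$ one has $u^\top(A+\Delta)u \ge \lambda_{\min}(A) - \|\Delta\| \ge \|A^{-1}\|^{-1} - \eweq\A$, which is strictly positive exactly when $\eweq\A\|A^{-1}\| = \kap\eweq < 1$. (Equivalently, writing $A + \Delta = A^{\frac12}(I + A^{-\frac12}\Delta A^{-\frac12})A^{\frac12}$, one has $\|A^{-\frac12}\Delta A^{-\frac12}\| \le \|A^{-1}\|\,\|\Delta\| \le \kap\eweq < 1$, so the middle factor, and hence $A+\Delta$, is SPD.) In particular $(A+\Delta)^{-1}$ exists under the hypothesis.

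For the error bound, set $x = A^{-1}b$ and $\check x = (A+\Delta)^{-1}(b+\delta)$. Subtracting $Ax = b$ from $(A+\Delta)\check x = b+\delta$ gives $A(\check x - x) = \delta - \Delta\check x = \delta - \Delta x - \Delta(\check x - x)$, so $\check x - x = A^{-1}\bigl(\delta - \Delta x - \Delta(\check x - x)\bigr)$. Applying $A^{\frac12}$ and the triangle inequality yields $\|\check x - x\|_A \le \|A^{-\frac12}\delta\| + \|A^{-\frac12}\Delta x\| + \|A^{-\frac12}\Delta(\check x - x)\|$. I would then bound each term using $\|A^{-\frac12}\| = \|A^{-1}\|^{\frac12}$, the inequality $\|z\| \le \|A^{-1}\|^{\frac12}\|z\|_A$, and $\|b\| = \|A^{\frac12}A^{-\frac12}b\| \le \|A\|^{\frac12}\|x\|_A$: the first term is $\le \kappa^{\frac12}(A)\eweq\|x\|_A$, the second $\le \kap\eweq\|x\|_A$, and the third $\le \kap\eweq\|\check x - x\|_A$. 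Collecting terms gives $(1 - \kap\eweq)\|\check x - x\|_A \le (\kappa^{\frac12}(A)+\kap)\eweq\|x\|_A$, and dividing by $(1-\kap\eweq)\|x\|_A > 0$ produces (\ref{relerr}) with $\phi$ as stated.

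The only genuine subtlety is the term $\Delta\check x$ involving the unknown solution $\check x$: the fix is to split $\check x = x + (\check x - x)$ and absorb the $\check x - x$ piece into the left-hand side, which is legitimate precisely because $\kap\eweq < 1$ — the same hypothesis that delivers positive definiteness. The remaining work is purely careful placement of the $A^{\pm\frac12}$ factors so that the $\delta$-contribution scales like $\kappa^{\frac12}(A)$ rather than $\kappa(A)$, which is what makes the numerator of $\phi$ come out as $\kap + \kappa^{\frac12}(A)$.
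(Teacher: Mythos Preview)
Your proposal is correct and follows essentially the same approach as the paper: the same componentwise-to-norm bounds $\|\Delta\|\le\eweq\A$, $\|\delta\|\le\eweq\|b\|$, the same $A^{\frac12}(I+E)A^{\frac12}$ factorization for the SPD claim, and the same careful placement of $A^{\pm\frac12}$ so that the $\delta$-term contributes $\kappa^{\frac12}(A)$ rather than $\kappa(A)$. The only stylistic difference is in how the factor $(1-\kap\eweq)^{-1}$ appears: the paper writes the explicit identity $(A+\Delta)^{-1}(b+\delta)-A^{-1}b = A^{-\frac12}(I+E)^{-1}\bigl(-EA^{\frac12}A^{-1}b+A^{-\frac12}\delta\bigr)$ and bounds $\|(I+E)^{-1}\|\le(1-\kap\eweq)^{-1}$ directly, whereas you subtract the two equations and absorb the $\Delta(\check x-x)$ term into the left-hand side; these are equivalent manipulations yielding the identical bound.
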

\begin{proof}
The relative error in each entry of $A + \Delta$ and $b + \delta$ is bounded by $\eweq$, which immediately yields the relative error bound
\begin{equation}
\|\delta\| \le \|b\|\eweq.
\label{b}
\end{equation}
Using $\cdot$ to emphasize multiplication, a bound for $A + \Delta$ follows by noting that $y^ty = |y|^t|y|$ and
$y^t\Delta \cdot y = |y^t\Delta \cdot y| \le |y|^t|\Delta |\cdot|y| \le |y|^t|A|\cdot|y|\eweq$
for any $y \in \Re^n$:
\begin{equation}
\|\Delta \| \le \||A|\| \eweq.
\label{A}
\end{equation}
By  (\ref{A}) and noting that $A + \Delta  = A^\frac{1}{2}\left(I + E\right)A^\frac{1}{2}, E = A^{-\frac{1}{2}}\Delta A^{-\frac{1}{2}}$, we have that
\begin{equation}
\|E\| \le \|\Delta \| \cdot \|A^{-1}\| \le  \||A|\|\cdot\|A^{-1}\|\eweq = \kap \eweq < 1,
\label{E}
\end{equation}
which proves that $I + E$ and, hence, $A + \Delta $ are positive definite. Note also that
\begin{equation}
\|\left(I + E\right)^{-1}\| \le \frac{1}{1 - \|E\|} \le \frac{1}{1 - \kap \eweq}.
\label{inv}
\end{equation}

We next obtain the following expression for the perturbation of $A^{-1}b$:
\begin{align}
(A + \Delta)^{-1}(b + \delta) - A^{-1}b
&=A^{-\frac{1}{2}}\left[\left(I + E\right)^{-1} - I\right]A^{-\frac{1}{2}}b + A^{-\frac{1}{2}}\left(I + E\right)^{-1}A^{-\frac{1}{2}}\delta \nonumber \\
&= A^{-\frac{1}{2}}\left(I + E\right)^{-1}\left(- E A^\frac{1}{2} A^{-1}b + A^{-\frac{1}{2}}\delta\right).
\label{del}
\end{align}
Finally, noting that $\|b\| = \|A^{\frac{1}{2}}A^{-\frac{1}{2}}b\| \le \|A^{\frac{1}{2}}\|\cdot\|A^{-\frac{1}{2}}b\|$, $\|w\|_A = \|A^\frac{1}{2}w\|$ for any vector $w$, and $\kappa(A)^{\frac{1}{2}} = \|A^{-\frac{1}{2}}\|\cdot \|A^\frac{1}{2}\|$, then the theorem is proved as follows:
\begin{align*}
\|(A +& \Delta)^{-1}(b + \delta) - A^{-1}b\|_A \\
&\le \|\left(I + E\right)^{-1}\|\left(\|E\|\cdot\|A^{-1}b\|_A + \|A^{-\frac{1}{2}}\delta\|\right) &\textrm{by (\ref{del})} \\
&\le \frac{1}{1 - \kap \eweq}\left(\|E\|\cdot\|A^{-1}b\|_A + \|A^{-\frac{1}{2}}\|\cdot\|\delta\|\right) &\textrm{by (\ref{inv})} \\
&\le \phi\eweq\|A^{-1}b\|_A &\textrm{by (\ref{b}) and (\ref{E})} .
\end{align*}
\end{proof}

When the multigrid components are extracted directly from the discretization, quantization in $\ewed_j$-precision incurs a relative $\kapj \ewed_j$ error in these components. However, our framework also applies to inherently algebraic problems, with the coarse-grid matrices in {\V } possibly constructed based on the Galerkin condition. For simplicity in illustrating rounding effects for this case, we consider a single level $j - 1$ only, assuming that $A_j$ and $P_j$ are exact and the Galerkin condition is computed in $\ewed_{j - 1}$-precision in the order given by $P_j^t\left(A_jP_j\right)$. Our next theorem shows that the resulting rounding errors are also $\bigO(\kapj \ewed_j)$.

\begin{thm}{\em Galerkin Rounding Errors.}
Fix $j \in \{1, 2,\dots, \ell-1\}$ and assume that $A_j$ and $P_j$ are exact. Then the coarse-grid matrix $A_{j - 1} = P_j^tA_jP_j + \Delta$ computed from the Galerkin condition in $\ewed_{j - 1}$-precision satisfies the relative error bound
\begin{equation}
\|\Delta\| \le (1 + 2\ewed_{j - 1})\ewed_{j - 1} \|P_j^t|A_j|P_j\| \le \kapj (1 + 2\ewed_{j - 1})\ewed_{j - 1} \|P_j^tA_jP_j\|.
\label{gerr}
\end{equation}
\label{galerror}
\end{thm}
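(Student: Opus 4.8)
The plan is to track the two matrix–matrix products that make up the Galerkin triple product $P_j^t(A_jP_j)$ when each is carried out in $\ewed_{j-1}$-precision. Following the standing convention of Section~\ref{sec:errordefs} that the only error in forming a coarse operator is the quantization itself, I would model each product as exact arithmetic followed by a rounding of the stored result. The first product then gives $\check B = A_jP_j + E_1$ with the entrywise bound $|E_1| \le \ewed_{j-1} |A_jP_j| \le \ewed_{j-1} |A_j| \, |P_j|$, and the second gives the stored coarse matrix $\check A_{j-1} = P_j^t\check B + E_2$ with $|E_2| \le \ewed_{j-1} |P_j^t\check B|$. Subtracting the exact $A_{j-1} = P_j^tA_jP_j$ makes the leading terms cancel, leaving $\Delta = P_j^tE_1 + E_2$, which is what one then has to estimate.

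Next I would propagate the componentwise bounds through this expression. Using $|XY| \le |X| \, |Y|$ on $P_j^tE_1$, and $|P_j^t\check B| \le |P_j^t| \, |\check B| \le (1+\ewed_{j-1}) |P_j^t| \, |A_jP_j|$ on $E_2$, the two rounding contributions combine; writing each computed product entry as its exact value times $(1+\varepsilon)$ with $|\varepsilon|\le\ewed_{j-1}$ and expanding the product of the two such factors makes the bookkeeping explicit and yields a bound of the form $|\Delta| \le (1+2\ewed_{j-1})\ewed_{j-1}\, |P_j^t| \, |A_j| \, |P_j|$. Since the $A$-matrices are stored in symmetric form, $\Delta$ is symmetric and is dominated entrywise by the nonnegative matrix on the right, so the estimate transfers to the spectral norm; for the interpolation used here $P_j \ge 0$, hence $|P_j| = P_j$, and this is precisely the first inequality $\|\Delta\| \le (1+2\ewed_{j-1})\ewed_{j-1}\, \| P_j^t|A_j|P_j \|$.

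For the second inequality I would argue purely in the Loewner (positive-semidefinite) order, with no further reference to rounding. Because $|A_j|$ is symmetric its eigenvalues lie in $[-\| |A_j| \|, \| |A_j| \|]$, so $P_j^t|A_j|P_j \preceq \| |A_j| \| \, P_j^tP_j$ (and $\succeq -\| |A_j| \| \, P_j^tP_j$), whence $\| P_j^t|A_j|P_j \| \le \| |A_j| \| \, \|P_j^tP_j\|$. Since $A_j \succeq \|A_j^{-1}\|^{-1} I$, we get $P_j^tP_j \preceq \|A_j^{-1}\| \, P_j^tA_jP_j$ with both sides positive semidefinite, hence $\|P_j^tP_j\| \le \|A_j^{-1}\| \, \|P_j^tA_jP_j\|$. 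Composing the two gives $\| P_j^t|A_j|P_j \| \le \| |A_j| \| \, \|A_j^{-1}\| \, \|P_j^tA_jP_j\| = \kapj \, \|P_j^tA_jP_j\|$, closing the chain.

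The main obstacle is not the algebra but pinning down the model and the \emph{sharp} form of the bound. A careless analysis of the two-step product would either leak factors depending on the sparsity counts $m_{A_j}$ and $m_{P_j}$ (which must not appear here, and do not, precisely because of the optimistic ``exact product, then quantize'' model) or only deliver the cruder estimate $\| |P_j^t| \, |A_j| \, |P_j| \|$, which is genuinely larger than $\| P_j^t|A_j|P_j \|$ and which, since $|P_j|^t|P_j|$ need not be Loewner-bounded by $\|P_j^tP_j\|$, would fail to chain into the $\kapj$ estimate. Getting the stated form therefore hinges on exploiting the sign structure of the interpolation and the symmetric storage of the operators; after that, the constant $(1+2\ewed_{j-1})\ewed_{j-1}$ is a matter of routine accounting of the two roundings.
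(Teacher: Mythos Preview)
Your proof is correct and mirrors the paper's: track the two products entrywise to obtain $|\Delta|\le(1+2\ewed_{j-1})\ewed_{j-1}\,P_j^t|A_j|P_j$, take norms for the first inequality, and then use the Loewner bound $|A_j|\preceq\||A_j|\|I\preceq\kapj A_j$ for the second (the paper conjugates this directly by $P_j$ rather than routing through $\|P_j^tP_j\|$, but that is the same one-line idea). Your explicit invocation of $P_j\ge 0$ and of the quantize-only product model (hence no $m_{A_j},m_{P_j}$ factors) simply spells out what the paper uses silently.
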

\begin{proof}
Write the computed $A_jP_j$ as $A_jP_j + \Delta_1$,  $|\Delta_1| \le |A_j| P_j \ewed_{j - 1}$. Then we can write $A_{j - 1} = P_j^t\left(A_jP_j + \Delta_1\right) + \Delta_2$,  $|\Delta_2| \le  P_j^t |A_jP_j + \Delta_1| \ewed_{j - 1}$. We thus have that
\[
|\Delta| = |P_j^t\Delta_1 + \Delta_2| \le \left( 1 + 2\ewed_{j - 1} \right)\ewed_{j - 1} P_j^t|A_j|P_j \le \kapj \left( 1 + 2\ewed_{j - 1} \right)\ewed_{j - 1} P_j^tA_jP_j ,
\]
which follows from $|A_j| \le \||A_j|\| I \le \kapj A_j$. Taking norms proves the theorem.
\end{proof}
Theorem~\ref{abround} suggests that $\kapj\ewed_j \ll 1$ is needed to ensure good V-cycles performance. After all, if $A_j$ is indefinite, then just computing the residual could expand the error associated with the negative spectrum. But this is not really a concern for {\V}: quantization has negligible effect in $\ewed_j$-precision on {\V} because this error expansion is small compared to other rounding errors, as our next theorem shows. Note that a similar result holds when all $A_j$ are computed via the Galerkin condition, where (\ref{gerr}) would be used recursively to account for errors accumulated over all levels. Note also that quantization of $M_j$ would have a truly negligible effect on the performance of {\V } because preconditioners only need to be crude approximations to the inverse (e.g., relaxation parameters are typically allowed to be anywhere in the interval $(0,2)$).

\begin{thm}{\em {\V } Quantization Errors.}
Let $A_j$ quantized in $\ewed_j$-precision be denoted by  $A_j + \Delta_j, \, |\Delta_j| \le |A_j| \ewed_j$, $1 \le j \le \ell$. Then Theorem~\ref{theorem3} holds with $\bet$ replaced by the slightly larger $\bet = 2 + 3 \cc + 2(\aladot + 1)(1 + \ewed_1)(1 + \cc)$.
\label{quant-v}
\end{thm}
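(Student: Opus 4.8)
The plan is to revisit the proof of Theorem~\ref{theorem3} in \cite{McCormick2020} and isolate the single place where the system matrix enters the V$(1,0)$-cycle computation. In Algorithm~\ref{alg-mg} the relaxation step (line~2) is just $y \leftarrow M_j r$ and involves only $M_j$; restriction, interpolation, and the correction update (lines~5, 7, 8) involve only $P_j$; and on the coarsest level ($\ell = 1$) the cycle is merely $y \leftarrow M_1 r$. Hence $A_j$ is used \emph{only} in the residual evaluation $r_{\textrm{v}} \leftarrow A_j y - r$ on line~4. Running {\V} with the quantized matrices $A_j + \Delta_j$ in place of the exact $A_j$ therefore perturbs the computed fine-grid residual only by the extra term $\Delta_j y$, on top of the usual floating-point error of a sparse matrix-vector product. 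Because the conclusion of Theorem~\ref{theorem3} is stated entirely in terms of the \emph{exact} $A$ --- it bounds $\|y - A^{-1}r\|_A$ relative to $\|A^{-1}r\|_A$ --- the ``exact trajectory'' against which the rounding errors are measured, the per-level propagation bound $\|V_j\|_{A_j}\le\rvstar$, and the energy norm are all unchanged; only the residual-perturbation bound grows, and in particular no positive definiteness of $A_j+\Delta_j$ is required (matching the discussion preceding the theorem).

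Next I would quantify that growth. Since $|\Delta_j|\le|A_j|\ewed_j$ componentwise and $\||z|\|=\|z\|$, we have
\[
\|\Delta_j y\| \;\le\; \ewed_j\,\||A_j|\|\,\|y\|, \qquad \||A_j+\Delta_j|\| \;\le\; (1+\ewed_j)\,\||A_j|\| ,
\]
the second from $|A_j+\Delta_j|\le(1+\ewed_j)|A_j|$. Adding the extra term $\Delta_j y$ to the standard residual rounding bound for $(A_j+\Delta_j)y - r$ --- whose matrix has the same sparsity pattern as $A_j$, so its rounding factor is still at most $\aladot$, now multiplying $\||A_j+\Delta_j|\|\le(1+\ewed_j)\||A_j|\|$ --- one finds the total error in the computed residual (relative to the exact $A_jy - r$) is bounded by $(\aladot+1)(1+\ewed_j)\,\ewed_j\bigl(\||A_j|\|\,\|y\| + \|r\|\bigr)$, hence by $(\aladot+1)(1+\ewed_1)\,\ewed_j\bigl(\||A_j|\|\,\|y\| + \|r\|\bigr)$ using $\ewed_j\le\ewed_1$ (the coarsest level carries the lowest precision). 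This is exactly the residual bound used in \cite{McCormick2020}, with the sole change that the constant $\aladot$ is enlarged to $(\aladot+1)(1+\ewed_1)$.

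I would then verify, by inspecting the proof of Theorem~\ref{theorem3}, that $\aladot$ enters the convergence polynomial $\pv(\pvard_j)$ and its constant $\bet = 2 + 3\cc + 2\aladot(1+\cc)$ \emph{only} through that residual-computation term --- i.e.\ only in the summand $2\aladot(1+\cc)$, the ``$2$'' and ``$3\cc$'' coming from relaxation and the geometric accumulation of per-level errors and carrying no matrix-sparsity constant. Making the replacement $\aladot\mapsto(\aladot+1)(1+\ewed_1)$ in that summand gives the asserted $\bet = 2 + 3\cc + 2(\aladot+1)(1+\ewed_1)(1+\cc)$. Every other ingredient --- the relaxation estimate (\ref{mest}), the constants $\cc$, $\alp$, $\kappa^{\frac{1}{2}}(P^tP)$, the coarsening ratio $\cratio$, and the per-level bound $\rvstar$ --- is untouched, so the rest of the argument in \cite{McCormick2020} carries over verbatim, yielding the same conclusion $\|y-A^{-1}r\|_A\le\rv\|A^{-1}r\|_A$ under the same hypothesis $\pv(\pvard_j)<1-\rvstar$.

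The main obstacle is bookkeeping rather than anything conceptual: since the detailed proof of Theorem~\ref{theorem3} lives in the companion paper, the delicate point is to confirm that $A_j$ truly appears only in the line-4 residual evaluation and, correspondingly, that $\aladot$ occurs in $\bet$ only in the combination $2\aladot(1+\cc)$, so that the single substitution above is all that is needed; one must also check that replacing the level-dependent factor $(1+\ewed_j)$ by the level-independent $(1+\ewed_1)$ inside the global constant $\bet$ is legitimate, which uses $\ewed_1=\max_{1\le j\le\ell}\ewed_j$.
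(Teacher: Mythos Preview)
Your proposal is correct and follows essentially the same approach as the paper: both identify that $A_j$ appears in {\V} only in the line-4 residual, bound the perturbed residual error by replacing $\aladot$ with $(\aladot+1)(1+\ewed_1)$ (using $\ewed_j\le\ewed_1$ to keep $\bet$ level-independent), and then carry the rest of the Theorem~\ref{theorem3} proof through verbatim. The one small point to tighten is that the companion paper's bound (37) is stated \emph{componentwise}, $|\delta_1|\le\aladot\ewed(|r|+|A|\cdot|y|)$, and the downstream argument uses that form before taking norms; your estimates $|\Delta_j|\le|A_j|\ewed_j$ already give the componentwise version $|\delta_3|\le(\aladot+1)(1+\ewed_1)\ewed_j(|r|+|A_j|\cdot|y|)$ directly, so you should state it that way rather than in norms to plug in cleanly.
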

\begin{proof}
We treat each level individually because the exact $A_j$ is rounded directly, without error accumulation. Dropping subscript $j$, since $A$ is only used in {\V } for computing the residual just before coarsening, all we need do is establish a quantized version of bound (37) in the proof of Theorem 2 of \cite{McCormick2020}, which is of the form
\[
r^{(\frac{1}{2})} =  Ay - r +  \delta_1, \quad
|\delta_1| \le \aladot \ewed \left(|r| + |A| \cdot |y|\right).
\]
Substituting in quantized $A$ yields $r =  (A + \Delta)y - r +  \delta_2$, where

\[
|\delta_2| \le \aladot \ewed \left(|r| + |A + \Delta| \cdot |y|\right) \le \aladot (1 + \ewed)\ewed \left(|r| + |A| \cdot |y|\right).
\]
Thus, $r^{(\frac{1}{2})} =  Ay - r + \delta_3$, where
\[
|\delta_3| = |\Delta \cdot y + \delta_2| \le (\aladot + 1) (1 + \ewed_1) \ewed \left(|r| + |A| \cdot |y|\right),
\]
where we replaced $\ewed_j$ by $\ewed_1 \ge \ewed_j$ to ensure that the change in $\bet$ is just $\aladot$ replaced with the slightly larger {\em constant} $(\aladot + 1) (1 + \ewed_1)$. This completes the proof.
\end{proof}
\begin{rem}{\em Sensitivity of {\V } to Quantization.}
Theorem~\ref{quant-v} confirms that {\V } is insulated from the indefiniteness that quantization may create. This insensitivity comes from the fact that V-cycles are basically just a hierarchy of simple relaxation steps that have little effect on the near-kernel error components that indefiniteness may alter. On coarse enough grids, relaxation may begin to significantly affect the near-kernel components, but this is just where the system matrices retain positive definiteness (because the condition numbers are small). Other basic relaxation methods may also be insensitivity to quantization, but they tend not to be very efficient solvers for PDEs. On the other hand, while direct solvers can be applied to modest-size discrete PDEs, their reliance on positive definiteness to control the error makes them very sensitive to quantization.
\end{rem}
\begin{thm}{\em {\ir } Quantization Errors.}
Let $A$ and $b$ quantized in $\eweq$-precision be denoted by $A + \Delta, \, |\Delta| \le |A| \eweq,$ and $b + \delta, \, |\delta| \le |b| \eweq,$ respectively. Then Theorem~\ref{theorem2} holds with $(1 + \ewe)\alabar \pvarb$ replaced by $(1 + \ewe)\left(\pvarq + (1 + \eweq)\alabar\pvarb\right)$ in the expressions for $\pir$ and $\thresh$ in (\ref{chi}) , where $\pvarq = \kappa(A) \eweq$.
\label{quant-ir}
\end{thm}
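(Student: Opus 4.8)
The plan is to observe that, in the proof of Theorem~\ref{theorem2} in \cite{McCormick2020}, the matrix and right-hand side enter only through one place: the computation of the {\ir } residual $r$ in $\eweb$-precision, followed by rounding to $\ewe$-precision. The inner solver is used only through the abstract contraction hypothesis $\|y - A^{-1}r\|_A \le \rinner\|A^{-1}r\|_A$ (its robustness to quantization being covered separately by Theorems~\ref{theorem3} and~\ref{quant-v}), and the update $x \leftarrow x - y$ is pure $\ewe$-precision arithmetic. So it suffices to re-derive the residual error bound with $\check A$ and $\check b$ in place of $A$ and $b$, fold the quantization perturbations $\Delta$ and $\delta$ into that error, and then quote the rest of the argument verbatim.

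Concretely, I would first record that entrywise rounding gives $|\check A| \le (1+\eweq)|A|$ and $|\check b| \le (1+\eweq)|b|$. Evaluating $r = \check A x^{(i)} - \check b$ in $\eweb$-precision, the same inner-product/matrix--vector error bound used in \cite{McCormick2020} (now applied to the ha\v{c}ek quantities) gives $r = \check A x^{(i)} - \check b + \check\delta_r$ with $|\check\delta_r| \le \alabar\eweb\,(|\check A|\,|x^{(i)}| + |\check b|) \le (1+\eweq)\alabar\eweb\,(|A|\,|x^{(i)}| + |b|)$. Writing $\check A x^{(i)} - \check b = (Ax^{(i)} - b) + (\Delta x^{(i)} - \delta)$ and setting $\delta_1 = \Delta x^{(i)} - \delta + \check\delta_r$, we get $r = Ax^{(i)} - b + \delta_1$ with the componentwise bound
\[
|\delta_1| \le |\Delta|\,|x^{(i)}| + |\delta| + |\check\delta_r| \le \bigl(\eweq + (1+\eweq)\alabar\eweb\bigr)\bigl(|A|\,|x^{(i)}| + |b|\bigr).
\]
This is exactly the form of the residual error in \cite{McCormick2020}, with the scalar coefficient $\alabar\eweb$ of $(|A|\,|x^{(i)}| + |b|)$ replaced by $\eweq + (1+\eweq)\alabar\eweb$.

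Since $\delta_1$ now occupies the role of the residual error in the remainder of the proof --- it is rounded to $\ewe$-precision (contributing the overall factor $1+\ewe$), propagated through the inner solver and the update (contributing $\gmm(1+\rinner)$), and measured in the energy norm through the $\|A^{-\frac12}\,\cdot\,\|$ estimates that attach a factor $\kappa(A)$, i.e.\ that send $\eweb \mapsto \pvarb$ --- and since $\eweq$ and $\eweb$ now sit in the identical position inside that coefficient, the term $\gmm(1+\rinner)(1+\ewe)\alabar\pvarb$ appearing in both $\pir$ and $\thresh$ in (\ref{chi}) becomes $\gmm(1+\rinner)(1+\ewe)\bigl(\pvarq + (1+\eweq)\alabar\pvarb\bigr)$, where $\pvarq = \kappa(A)\eweq$ and $\alabar$, being precision-independent, is unchanged. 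Equivalently, $(1+\ewe)\alabar\pvarb$ is replaced by $(1+\ewe)\bigl(\pvarq + (1+\eweq)\alabar\pvarb\bigr)$ in $\pir$ and $\thresh$, while (\ref{ec}) and (\ref{Kbnd}) carry over verbatim with these modified constants.

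The main (and fairly modest) obstacle is the claim that the residual computation is genuinely the \emph{only} place $A$ and $b$ --- as opposed to an abstract SPD matrix with $\le m_A$ nonzeros per row --- enter the proof of Theorem~\ref{theorem2}: everything downstream of forming $r$ must use only properties shared by $A$ and $\check A$, so that replacing the working matrix by $\check A$ inside the inner solver and the update introduces nothing beyond $\delta_1$. Confirming this requires the internal structure of the \cite{McCormick2020} proof, which is not visible from the excerpt. The remaining delicate point is bookkeeping the $(1+\eweq)$ factors in $|\check A| \le (1+\eweq)|A|$, $|\check b| \le (1+\eweq)|b|$, and $|\check\delta_r|$ so that they collect exactly into the stated $\pvarq + (1+\eweq)\alabar\pvarb$; once these two checks are in place the modification is mechanical, as the absorption of $\Delta$ and $\delta$ into $\delta_1$ is immediate.
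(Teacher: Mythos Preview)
Your proposal is correct and follows essentially the same approach as the paper: both identify that quantization enters the proof of Theorem~\ref{theorem2} only through the residual bound (13) in \cite{McCormick2020}, substitute $\check A,\check b$ there, and absorb $\Delta x-\delta$ into the error term to obtain the modified coefficient $(1+\ewe)\bigl(\eweq+(1+\eweq)\alabar\eweb\bigr)$ on $|A|\,|x|+|b|$. The paper carries out explicitly the $\ewe$-rounding step you summarize as ``contributing the overall factor $1+\ewe$,'' confirming in particular that the $\ewe|Ax-b|$ term (hence the $\pvar$ contributions to $\pir$ and $\thresh$) is unchanged, which resolves the residual uncertainty you flag.
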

\begin{proof}
The proof is analogous to that of Theorem~\ref{quant-v}, but with three terms quantized in bound (13) in the proof of Theorem 1 of \cite{McCormick2020}, which for exact $A$ and $b$ reads
\[
r = Ax - b + \delta_1,
\quad |\delta_1| \le \ewe |Ax - b| + (1 + \ewe)\alabar {\eweb} \left(|b| + |A| \cdot |x|\right).
\]
Substituting in quantized $A$ and $b$ thus yields $r = (A + \Delta)x - (b + \delta) + \delta_1$, where
\begin{align*}
|\delta_1| & \le \ewe |(A + \Delta)x - (b + \delta)| + (1 + \ewe)\alabar {\eweb} \left(|b + \delta| + |A + \Delta| \cdot |x|\right) \\
& \le \ewe |Ax - b| + \ewe (|\delta| + |\Delta|\cdot|x|) + (1 + \ewe)\alabar {\eweb} \left(|b| + |A| \cdot |x| + |\delta| + |\Delta|\cdot|x|\right) \\
& \le \ewe |Ax - b| + \left(\ewe \eweq + (1 + \ewe)(1 + \eweq)\alabar {\eweb}\right) \left(|b| + |A| \cdot |x|\right).
\end{align*}
Thus, $r = (A + \Delta)x - (b + \delta) + \delta_1 = Ax - b + \delta_2$, and the theorem follows because
\begin{align*}
|\delta_2| & = |\Delta \cdot x - \delta + \delta_1| \\
& \le \ewe |Ax - b| + |\delta| + |\Delta| \cdot |x| + \left(\ewe \eweq + (1 + \ewe)(1 + \eweq)\alabar {\eweb}\right) \left(|b| + |A| \cdot |x|\right) \\
& \le \ewe |Ax - b| + (1 + \ewe)\left(\eweq + (1 + \eweq)\alabar\eweb\right)\left(|b| + |A| \cdot |x|\right).
\end{align*}
\end{proof}

\section{Effect of Input Quantization on \fmg}
\label{coarsening}
{\fmg } is more sensitive to quantization because it relies directly in step 4 of Algorithm~\ref{alg-fmg} on the SAP (\ref{C}). (We assume from now on that (\ref{C}) holds when $A, b,$ and $P$ are exact on all levels.) Here we analyze the effects on {\fmg} of rounding $A_{j - 1}, P_j$, and $b_{j - 1}$ for a fixed $j \in \{2,3,\dots,\ell\}$ to a given {\em quantization precision} $\eweq$. To clarify where this rounding occurs, note that each recursive call to {\fmg } means that $j-1$ serves as the finest level for the inner {\V } calls on {\fmg } level $j - 1$. So $A_{j - 1}$ and $b_{j - 1}$ rounded to precision $\eweq_{j - 1} \le \ewe_{j - 1}$ in {\fmg } means that these rounded quantities are passed into the recursive call to {\fmg } from level $j$ to the coarser level. Note that the resulting $A_{j - 1}$ is further rounded to $\ewed_{j - 1}$-precision in the inner call to \V. Similarly, rounding $P_j$ to $\eweq_j$-precision in {\fmg } means that this occurs in step 4 when the full approximation is interpolated from the current finest grid $j - 1$ to the new finest grid $j$. All other multigrid components are processed in $\ewed$-precision within the inner {\V } solver. Our final theorem extends Theorem~4 in \cite{McCormick2020} to account for these quantization  errors, at the cost of increased complexity. Aligned with our ultimate goal of balancing errors, the aim here is for the solver and rounding errors to each be smaller than $Ch^q$, as opposed to bounding their sum as in (\ref{mess}). The key to this extension is to establish a SAP that accounts for quantization. Specifically, with $A_j^{-1}b_j + \delta_j$ denoting the exact solution of $A_j x_j = b_j$ when $A_j$ and $b_j$ have been quantized, then the extended SAP asserts existence of a constant $\check{C}$ such that
\begin{equation}
\|P_j A_{j - 1}^{-1}b_{j - 1}+\delta_{j - 1} - (A_j^{-1}b_j + \delta_j)\|_{A_j}\le \check{C} h_{j - 1}^q \|A_j^{-1}b_j + \delta_j\|_{A_j}, \quad 2 \le j \le \ell.
\label{eC}
\end{equation}
\begin{thm}{\em {\fmg } Quantization Errors.} The extended SAP (\ref{eC}) holds with
\[
\check{C} = \max_{2 \le j \le \ell}\{\left(C + \kappa^\frac{q}{2m}(A_{j - 1}) \left(\phi_{j - 1} \eweq_{j - 1} + \phi_j \eweq_j + \kappa_j \eweq_j \left(1 + \phi_{j - 1} \eweq_{j - 1} \right)\right)\right)(1 + \phi_j \eweq_j)\},
\label{sap-C}
\]
where  $\phi_j = \frac{\kapj +  \kappa^\frac{1}{2}(A_j)}{1 - \kapj\eweq_j}$ and $\kappa_j = \kappa^\frac{1}{2}(A_j) \underline{\kappa}^\frac{1}{2}(A_{j - 1}), \, 2 \le j \le \ell.$ Moreover, {\fmg } approximates the solution $A^{-1}b + \delta$ of the quantized version of (\ref{axb}) to the level of discretization accuracy provided $\rh_v + \pir < 1$ and the following hold on every level:
\begin{equation}
\frac{\thresh}{1 -(\rh_v + \pir)} < C h^q \quad\textrm{  and  }\quad
(\rh_v + \pir)^\I  \left(\theta^q C_c h^q +\mu_c\right) \le C h^q,  
\label{Nquant}
\end{equation}
where the constants $C_c = \max_j\{(1 + \kappa^\frac{1}{2}(A_j)\eweq_j)(1 + \phi_{j - 1} \eweq_{j - 1})(1 + \phi_j \eweq_j)C + \check{C}\} $ and $\mu_c = \max_j{\eweq_j(1 + \eweq_j)\kappa^\frac{1}{2}(A_j)(1 + C h_{j - 1}^q)(1 + \phi_{j - 1} \eweq_{j - 1})(1 + \phi_j \eweq_j)}$, and subscript $j$ is understood for the other terms in (\ref{Nquant}).
\end{thm}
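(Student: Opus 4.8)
The plan is to prove the two assertions in turn: first the extended strong approximation property~(\ref{eC}) with the stated constant $\check{C}$, and then the discretization-accuracy conclusion by an induction over the grid levels that parallels the proof of Theorem~\ref{theorem4} in \cite{McCormick2020}, now invoking~(\ref{eC}) wherever~(\ref{C}) was used and taking the quantized solutions as the reference objects on each level.

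To establish~(\ref{eC}), fix $j$ and write $\check{x}_{j-1}=A_{j-1}^{-1}b_{j-1}+\delta_{j-1}$, $\check{x}_j=A_j^{-1}b_j+\delta_j$ for the exact solutions of the two quantized systems and $\check{P}_j=P_j+\Delta P_j$, $|\Delta P_j|\le|P_j|\eweq_j$, for the quantized interpolation. I would use the exact identity
\[
\check{P}_j\check{x}_{j-1}-\check{x}_j=\bigl(P_jA_{j-1}^{-1}b_{j-1}-A_j^{-1}b_j\bigr)+P_j\delta_{j-1}-\delta_j+\Delta P_j\,\check{x}_{j-1}
\]
and bound its four pieces in the $A_j$-norm. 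The first is the exact SAP~(\ref{C}). For the second, the Galerkin identity $A_{j-1}=P_j^tA_jP_j$ gives $\|P_j\delta_{j-1}\|_{A_j}=\|\delta_{j-1}\|_{A_{j-1}}$, Theorem~\ref{abround} on level $j-1$ bounds this by $\phi_{j-1}\eweq_{j-1}\|A_{j-1}^{-1}b_{j-1}\|_{A_{j-1}}$, and the contraction $\|A_{j-1}^{-1}b_{j-1}\|_{A_{j-1}}\le\|A_j^{-1}b_j\|_{A_j}$ — which holds since $b_{j-1}=P_j^tb_j$ and $P_j(P_j^tA_jP_j)^{-1}P_j^tA_j$ is the $A_j$-orthogonal projector onto the range of $P_j$ — converts it to a multiple of $\|A_j^{-1}b_j\|_{A_j}$. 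The third is Theorem~\ref{abround} on level $j$: $\|\delta_j\|_{A_j}\le\phi_j\eweq_j\|A_j^{-1}b_j\|_{A_j}$. For the fourth, I would start from the componentwise bound $|\Delta P_j\check{x}_{j-1}|\le|P_j|\,|\check{x}_{j-1}|\,\eweq_j$, pass to the Euclidean norm, then to the $A_j$-energy norm via the crude estimates $\|z\|_{A_j}\le\kappa^{\frac{1}{2}}(A_j)\|z\|$ and $\|\check{x}_{j-1}\|\le\underline{\kappa}^{\frac{1}{2}}(A_{j-1})\|\check{x}_{j-1}\|_{A_{j-1}}$ (these conversions are the source of $\kappa_j=\kappa^{\frac{1}{2}}(A_j)\underline{\kappa}^{\frac{1}{2}}(A_{j-1})$), and close with $\|\check{x}_{j-1}\|_{A_{j-1}}\le(1+\phi_{j-1}\eweq_{j-1})\|A_{j-1}^{-1}b_{j-1}\|_{A_{j-1}}\le(1+\phi_{j-1}\eweq_{j-1})\|A_j^{-1}b_j\|_{A_j}$. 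Summing the four bounds, dividing by $\|A_j^{-1}b_j\|_{A_j}$, rewriting each $h_{j-1}$-free contribution as $h_{j-1}^q$ times its $\kappa^{\frac{q}{2m}}(A_{j-1})=h_{j-1}^{-q}$ amplification, and replacing $\|A_j^{-1}b_j\|_{A_j}$ by $\|\check{x}_j\|_{A_j}$ at the cost of the outer factor $(1+\phi_j\eweq_j)$ yields~(\ref{eC}) with the stated $\check{C}$.

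For the second assertion I would induct on $j$, the hypothesis being that the {\fmg} iterate $\check{x}_{j-1}^{\textrm{FMG}}$ on level $j-1$ solves the quantized level-$(j-1)$ system to discretization accuracy (relative $A_{j-1}$-error at most $Ch_{j-1}^q$); the base case $j=1$ follows, via Theorem~\ref{abround}, from the standing coarsest-grid assumption since $\underline{\kappa}(A_1)\eweq_1\ll1$ makes $\check{x}_1$ a negligible perturbation of $A_1^{-1}b_1$. In step~4 of Algorithm~\ref{alg-fmg} the level-$j$ initial guess $\check{x}_j^{(0)}$ is the finite-precision evaluation of $\check{P}_j\check{x}_{j-1}^{\textrm{FMG}}$. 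Splitting the error $\check{x}_j^{(0)}-\check{x}_j$ through $P_j\check{x}_{j-1}$ and bounding the interpolated-error term by the Galerkin identity and the inductive hypothesis, the $\check{P}_j$-quantization and product-rounding terms by the componentwise estimates of the previous paragraph, and the remaining term $P_j\check{x}_{j-1}-\check{x}_j$ by~(\ref{eC}), I obtain $\|\check{x}_j^{(0)}-\check{x}_j\|_{A_j}\le(\theta^qC_ch^q+\mu_c)\|\check{x}_j\|_{A_j}$ with $C_c$ and $\mu_c$ exactly as stated ($\mu_c$ absorbing the small $\kappa^{\frac{1}{2}}(A_j)\eweq_j$-type terms from forming the interpolant and, via Theorem~\ref{quant-ir}, the {\ir} residual in quantized arithmetic). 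The $\I$ inner {\ir} cycles, each one {\V}, then contract this error: Theorem~\ref{quant-v} supplies the V-cycle factor $\rh_v$ with the enlarged $\bet$ and Theorem~\ref{quant-ir} the enclosing factor $\rh_v+\pir$ and floor $\thresh$ with the stated replacement of $(1+\ewe)\alabar\pvarb$, so the geometric-series bound~(\ref{Kbnd}) applied with target $\check{x}_j$ gives
\[
\frac{\|\check{x}_j^{(\I)}-\check{x}_j\|_{A_j}}{\|\check{x}_j\|_{A_j}}\le(\rh_v+\pir)^\I\bigl(\theta^qC_ch^q+\mu_c\bigr)+\frac{\thresh}{1-(\rh_v+\pir)}.
\]
The two conditions in~(\ref{Nquant}) bound the two summands each by $Ch^q$, so $\check{x}_j^{(\I)}$ solves the quantized level-$j$ system to within (twice) discretization accuracy; this closes the induction, and the case $j=\ell$ is the asserted statement about $A^{-1}b+\delta$.

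I expect the bookkeeping in~(\ref{eC}) to be the main obstacle: keeping the three independent quantization sources ($A_{j-1},b_{j-1}$ on level $j-1$; $A_j,b_j$ on level $j$; and $P_j$) cleanly separated while repeatedly moving between the $A_{j-1}$- and $A_j$-energy norms, and in particular justifying the $\kappa_j$ factor for the $\Delta P_j$ term via consistent but necessarily coarse norm estimates. A secondary difficulty is confirming that the constant growth ($C\to\check{C}$, plus the appearance of $C_c$ and $\mu_c$) is genuinely absorbed by~(\ref{Nquant}), so that the per-level constant stays bounded along the hierarchy; this is exactly why~(\ref{Nquant}) imposes two separate ``$\le Ch^q$'' requirements rather than a single bound on the sum as in~(\ref{mess}).
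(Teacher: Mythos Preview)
Your proposal follows the same route as the paper: the identical four-term splitting of $\check P_j\check x_{j-1}-\check x_j$ for the extended SAP, the same induction on levels with the interpolated coarse iterate decomposed into the inductive-error piece, the extended-SAP piece, and the product-rounding piece $\delta_x$, and the same closing appeal to the quantized {\ir}/{\V} estimates (Theorems~\ref{quant-v} and~\ref{quant-ir}) via~(\ref{Kbnd}).

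One step as you wrote it would not go through. For the $\Delta P_j$ term you propose the ``crude estimates'' $\|z\|_{A_j}\le\kappa^{1/2}(A_j)\|z\|$ and $\|\check x_{j-1}\|\le\underline\kappa^{1/2}(A_{j-1})\|\check x_{j-1}\|_{A_{j-1}}$; neither holds in general (the first confuses $\|A_j\|^{1/2}$ with $\kappa^{1/2}(A_j)$, and both together would still leave an uncontrolled $\|P_j\|$). The paper gets the factor $\kappa_j=\kappa^{1/2}(A_j)\,\underline\kappa^{1/2}(A_{j-1})$ by a chain that uses the Galerkin identity a second time, now applied to $P_j|\check x_{j-1}|$:
\[
\|\Delta_P w\|_{A_j}\le\|A_j^{1/2}\|\,\eweq_j\,\|P_j|w|\|\le\|A_j^{1/2}\|\,\|A_j^{-1/2}\|\,\eweq_j\,\|P_j|w|\|_{A_j}=\kappa^{1/2}(A_j)\,\eweq_j\,\||w|\|_{A_{j-1}},
\]
followed by $\||w|\|_{A_{j-1}}\le\underline\kappa^{1/2}(A_{j-1})\|w\|_{A_{j-1}}$. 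With that correction your argument is the paper's argument. Your parenthetical ``(twice) discretization accuracy'' is also apt: the two separate $\le Ch^q$ conditions in~(\ref{Nquant}) are deliberately looser than the single summed bound~(\ref{mess}) in Theorem~\ref{theorem4}, as the paper notes just before the theorem statement.
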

\begin{proof}
Dropping subscript $j$ and replacing subscript $j - 1$ by $c$, let $P + \Delta_P$ denote quantized $P$, where $|\Delta_P \cdot z| \le |\Delta_P| \cdot |z| \le \eweq P \cdot |z|$ for any coarse-grid $z$. This proof uses the bounds $\|\delta_c\|_{A_c} \le \phi_c \eweq_c \|A_c^{-1}b_c\|_{A_c}$, $\|\delta\|_{A} \le \phi \eweq \|A^{-1}b\|_{A}$, and $\|A_c^{-1}b_c + \delta_c\|_{A_c} \le (1 + \phi_c \eweq_c)(1 + \phi \eweq) \|A^{-1}b + \delta\|_{A}$ that are implied by (\ref{relerr}). The proof assumes familiarity with the logic as well as some estimates used in \cite{McCormick2020}, including $\||z|\|_{A_c} \le \|A_c^\frac{1}{2}\|\cdot\|z\| \le \kapjm^\frac{1}{2} \|z\|_{A_c}$ and $\|A_c^{-1}b_c\|_{A_c} \le \|A^{-1}b\|_A$.

To establish the extended SAP (\ref{eC}), first note that
\begin{align*}
\|\Delta_P\left(A_c^{-1}b_c + \delta_c\right)\|_A &\le \|A^{\frac{1}{2}}\|\eweq \|P\cdot |A_c^{-1}b_c + \delta_c | \| \\
&\le \kappa^\frac{1}{2}(A) \eweq \||A_c^{-1}b_c + \delta_c | \|_{A_c} \\
&\le \kappa \eweq \left(1 + \phi_c \eweq_c \right) \|A_c^{-1}b_c \|_{A_c} .
\end{align*}
Then $h_c = \kappa^{-\frac{1}{2\emm}} (A_c)$, (48) in \cite{McCormick2020}, and the original SAP (\ref{C}) establish (\ref{eC}):
\begin{align*}
\|\left(P + \Delta_P\right)& \left(A_c^{-1}b_c + \delta_c\right) - \left(A^{-1}b + \delta\right)\|_A \\
&\le \|P A_c^{-1}b_c - A^{-1}b\|_A + \|\delta\|_A + \|P \delta_c\|_A + \|\Delta_P\left(A_c^{-1}b_c + \delta_c\right)\|_A \\
&\le (Ch_c^q + \phi \eweq)\|A^{-1}b\|_A + \left(\phi_c \eweq_c + \kappa \eweq \left(1 + \phi_c \eweq_c \right)\right)  \|A_c^{-1}b_c \|_{A_c} \\
&\le \left( C + \kappa^\frac{q}{2m}(A_c) \left(\phi_c \eweq_c + \phi \eweq + \kappa \eweq \left(1 + \phi_c \eweq_c \right) \right) \right) h_c^q \|A^{-1}b\|_A \\
&\le \check{C} h_c^q \|A^{-1}b + \delta\|_A .
\end{align*}
For FMG convergence, assume for induction purposes that the coarse-grid result, $x_c$, has properly converged:  $\|x_c - (A_c^{-1}b_c + \delta_c)\|_{A_c} \le C h_c^q \|A_c^{-1}b_c + \delta_c\|_{A_c}$. Then
\begin{align*}
\|\left(P + \Delta_P\right)& \left(x_c - (A_c^{-1}b_c + \delta_c)\right)\|_A \\
&\le \|P\left(x_c - (A_c^{-1}b_c + \delta_c)\right)\|_A + \|\Delta_P \left(x_c - (A_c^{-1}b_c + \delta_c)\right)\|_A \\
&\le (1 + \kappa^\frac{1}{2}(A)\eweq)C h_c^q \|A_c^{-1}b_c + \delta_c\|_{A_c} ,
\end{align*}
which with (\ref{eC})  implies that
\begin{align}
&\|\left(P + \Delta_P\right) x_c - (A^{-1}b + \delta)\|_A \nonumber \\
&\le \|\left(P + \Delta_P\right) \left(x_c - (A_c^{-1}b_c + \delta_c)\right)\|_A + \||\left(P + \Delta_P\right) (A_c^{-1}b_c + \delta_c) - (A^{-1}b + \delta)\|_A \nonumber\\
&\le \left((1 + \kappa^\frac{1}{2}(A)\eweq)(1 + \phi_c \eweq_c)(1 + \phi \eweq)C + \check{C}\right) h_c^q \|A^{-1}b +\delta\|_{A}.
\label{two}
\end{align}
Denote $\left(P + \Delta_P\right)x_c$ computed in $\eweq$-precision by $\left(P + \Delta_P\right)x_c + \delta_x$, where $|\delta_x| \le \eweq \left(P + \Delta_P\right)|x_c| \le \eweq (1 + \eweq)P|x_c|$. But $\|\delta_x\|_{A_c} \le  \eweq(1 + \eweq)\kappa^\frac{1}{2}(A)\|x_c\|_{A_c}$ and
\begin{align*}
\|x_c\|_{A_c} &\le \|x_c\|_{A_c} + \|x_c - (A_c^{-1}b_c + \delta_c)\|_{A_c} \\
&\le (1 + C h_c^q) \|A_c^{-1}b_c + \delta_c\|_{A_c} \\
&\le (1 + C h_c^q)(1 + \phi_c \eweq_c)(1 + \phi \eweq) \|A^{-1}b + \delta\|_{A} .
\end{align*}
Thus, $|\delta_x\|_{A_c} \le \eweq(1 + \eweq)\kappa^\frac{1}{2}(A)(1 + C h_c^q)(1 + \phi_c \eweq_c)(1 + \phi \eweq) \|A^{-1}b + \delta\|_{A}$, which with (\ref{two}) confirms that $\|x - (A^{-1}b + \delta)\|_A \le C_c \|A^{-1}b + \delta\|_{A}$, where $x = \left(P + \Delta_P\right) x_c + \delta_x$ is the initial iterate for the V-cycles on the fine grid. As in the proof of Theorem~4 in \cite{McCormick2020}, we then invoke Theorems~\ref{theorem2} and \ref{theorem3} above to prove the theorem.
\end{proof}
The condition on $N$ in (\ref{Nquant}) can be substantially simplified by noting that $\kapjm \le \kappa(A_j)$ and $\kappa^\frac{1}{2}(A_j) \ll \kappa(A_j)$, by assuming that $\kapj \eweq_j \approx \kappa(A_j) \eweq_j \ll 1$ and $\kappa^\frac{q + 2m}{2m}(A_j)\eweq_j \lesssim C$, and by deleting negligible terms. We therefore conclude that $1 + \kapj \eweq \lesssim 1 + \kappa^\frac{q}{2m}(A_j)C \approx 1$, $\phi_j \approx \kapj$, and $\kappa^\frac{q}{2m}(A_j) \phi_j \approx C$, and similarly for other analogous terms. We then have that $\check{C} \approx 4C$, that $C_c \approx C + \check{C} \approx 5C$, and that $\mu_c \approx \kappa^\frac{1}{2}(A_j) \eweq_j \ll C$, leading to the simplified condition $5 {\rvstar}^\I \theta^q C h^q \le C h^q $, i.e., $5 {\rvstar}^\I \theta^q \lesssim 1$. $\I$ thus requires a relatively modest increase from $\frac{\log_2(\sqrt{2}) + q\log_2(\theta)}{| \log_2(\rvstar)|}$ in Remark~3 of \cite{McCormick2020} to the following estimate that accounts for quantization:
\begin{equation}
\I \approx \frac{\log_2(5) + q\log_2(\theta)}{| \log_2(\rvstar)|}.
\label{eN}
\end{equation}

\section{Precision Requirements}
\label{sec:precision}

Up to this point, we have referred to $\ewe$, $\eweb$, and $\ewed$ as standard, high, and low precision, respectively, without specifying how to select these precisions. Additionally, we have introduced $\eweq$ for the quantization precision. In this section, we show how the theoretical estimates can guide the selection of all of these precision levels. While the focus is on FMG, most of the tools discussed here also apply to V-cycles.

Our estimates involve discretization, quantization, rounding, and iteration errors. Ideally, all errors should be comparable so that computation is not wasted on reducing one only to have the end result contaminated by the others. As shown in (\ref{errdecomp}), the total error is bounded by the sum of the four types of errors, so our guiding principle is to assume that the \emph{bounds} for each of these errors should be comparable\footnote{If the cost of reducing different types of errors vary widely, then one could conceivably include weights when allocating the error-budget, but for simplicity we will not include that here.}. This goal leads to overestimates of the total error and the individual precision levels, but such is the nature of an a priori theoretical analysis. Also, while (\ref{errdecomp}) provides a decomposition of the absolute errors, we are able to focus instead on the relative errors by assuming that the total error is small enough that $\|\tilde{u}_h^{(\infty)}\|_{\mathcal{L}} \approx \|\check{u}_h\|_{\mathcal{L}} \approx \|u_h\|_{\mathcal{L}} \approx \|u\|_{\mathcal{L}}$.

We begin by recalling the basic requirement that $\eweb \leq \ewe \leq \ewed$. Additionally, we must have $\eweb \leq \eweq$ since $\eweb$ is the highest precision used for any computation. If $\eweb > \eweq$, then any computation would effectively include a rounding operation to at least $\eweb$-precision, which makes the choice of higher precision for $\eweq$ pointless. On the other hand, if $\eweq$ is strictly greater than $\eweb$, $\ewe$, and/or $\ewed$, then operations can still be performed in the specified precision by extending all $\eweq$-numbers with trailing zeros. 


\newcommand{\offset}{0.4}

\begin{figure}
\centering
\includegraphics{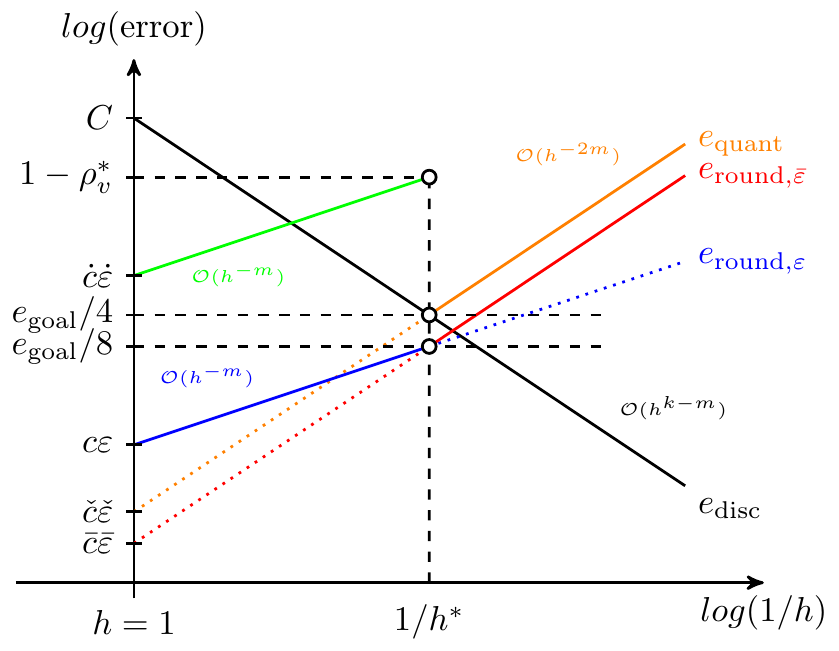}
\caption{Error balance diagram. The color coding corresponds to that used in our pseudo code.}
\label{fig:errbalance}
\end{figure}

To determine the required precision levels, we first illustrate in Figure~\ref{fig:errbalance} the different types of errors along with an assumed desired error-level, $e_{\textrm{goal}}$. Given that there are four contributions to the total error, each must be on the order of $\tfrac{1}{4}e_{\textrm{goal}}$. This level is shown as a one of the dashed horizontal lines in Figure~\ref{fig:errbalance}. The energy norm of the discretization error for a standard finite element discretization is given by $e_{\textrm{disc}} = C h^{q}$, where $q=k-m$ \cite[Sec.~2.2]{Strang2008}. The intersection between this discretization error line and the $\tfrac{1}{4}e_{\textrm{goal}}$ line defines the mesh size, $h^*$, required to obtain the desired accuracy. It may be necessary to round $h^*$ down to the nearest available mesh size. To be in balance, all errors must then be comparable \emph{at this grid resolution}.

The estimates for the quantization and rounding errors that we need in the following involve the matrix condition number, $\kappa$,
which is bounded according to $\kappa \leq c_\kappa h^{-2m}$ \cite[Theorem~5.1]{Strang2008}, where $c_\kappa$ is a constant. In practice, $\kappa$ depends on $k$, and so then must $c_\kappa$. 

Theorem~\ref{abround} effectively states that $e_{\textrm{quant}} \le \phi\eweq$. Assuming that $\kap\eweq \ll 1$ and $\kap \approx \kappa$, it follows that $\phi \approx \kappa \leq c_\kappa h^{-2m}$ and therefore that $e_{\textrm{quant}} \lesssim \check{c}\eweq h^{-2m}$ for $\check{c} = c_\kappa$. This bound for $e_{\textrm{quant}}$ is shown as the orange line in Figure~\ref{fig:errbalance}. The constant $\check{c}$ does not depend on $\eweq$, so assuming that we know $\check{c}$ it follows that we can choose $\eweq$ to obtain the desired quantization error at $h=h^*$. 

A bound for $e_{\textrm{round}}$ is provided by the last term in (\ref{Kbnd}), which by (\ref{chi}) is the sum of terms proportional to $\pvar = \kappa^{\frac{1}{2}}\ewe$ and to $\pvarb = \kappa\eweb$. Assuming that $\pvar \ll 1$, then $e_{\textrm{round},\ewe} \lesssim \pvar \leq c\ewe h^{-m}$ for $c = \sqrt{c_\kappa}$. Similarly, we get $e_{\textrm{round},\eweb} \lesssim \bar{c} \eweb h^{-2m}$ for some constant $\bar{c}$. An estimate of $\bar{c}$ follows from (\ref{chi}) if we again assume that $\kap \approx \kappa$. In this case, $1 < \gamma < 2$ and $1 < (1+\rho)(1+\ewe) < 2$, so $\alabar c_\kappa \lesssim \bar{c}\lesssim 4 \alabar c_\kappa$. 

The two bounds for $e_{\textrm{round},\ewe}$ and $e_{\textrm{round},\eweb}$ are shown in Figure~\ref{fig:errbalance} as blue and red lines, respectively. In mixed precision, $\ewe > \eweb$, which means that $e_{\textrm{round},\ewe}$ dominates the rounding error for small values of $\kappa$, while $e_{\textrm{round},\eweb}$ dominates for sufficiently large values of $\kappa$. To ensure that all errors are balanced, we choose $\ewe$ and $\eweb$ such that $e_{\textrm{round},\ewe} = e_{\textrm{round},\eweb} = \tfrac{1}{8}e_{\textrm{goal}}$ for $h=h^*$, meaning that $e_{\textrm{round}} = \tfrac{1}{4}e_{\textrm{goal}}$. Once again, assuming that we know $c$ and $\bar{c}$, we can then determine $\ewe$ and $\eweb$. While Figure~\ref{fig:errbalance} shows that $\bar{c}\eweb \leq \check{c}\eweq$, it is important to note that this does not necessarily imply that $\eweb < \eweq$. In fact, since $2e_{\textrm{round},\eweb} = e_{\textrm{quant}}$ at $h=h^*$, it follows that if we use the rough estimates for $\bar{c}$ and $\check{c}$, then we expect that $\eweq = 8 \alabar \eweb$, where $\alabar$ can be relatively large for high-order discretizations. 

The choice of $\ewed$ differs from the other precision levels in that we do not need to achieve $\tfrac{1}{4}e_{\textrm{goal}}$ accuracy in $\ewed$-precision, but instead simply need the V-cycle to be convergent in $\ewed$-precision. This means that we must have $\rv = \rvstar + \pv < 1$ at $h=h^*$. For sufficiently small $\pvard$, it follows from Theorem~\ref{theorem3} that $\pv = \bigO(\pvard)$, where $\pvard = \kappa^\frac{1}{2}\ewed$, so that $\pv \approx \dot{c}\ewed h^{-m}$ for some constant $\dot{c}$. This leads to the green line shown in Figure~\ref{fig:errbalance}. In practice, we have not found a reliable way to determine $\dot{c}$ accurately, but simply choosing $\ewed$ such that $\pvard \ll 1$ appears to work well. Concretely, we choose $\ewed = 0.1 / \kappa^\frac{1}{2}$. 


Balancing $e_{\textrm{iter}}$ with the other three errors amounts to restricting $\I$ rather than any precisions. Our goal is to allow discretization accuracy to include a balanced quantization error, so (\ref{eN}) provides the choice for $N$ that we need and it ensures that the {\em four} errors bounding $e_\textrm{total}$ in (\ref{errdecomp}) are approximately balanced. 

We have thus far assumed a single target accuracy. To extend the estimates to progressive precision for the V-cycle, we can choose $h^*$ (instead of $e_{\textrm{goal}}$) as the independent parameter and then repeat the exercise for every level in the multigrid hierarchy. This leads to $\delta_{\rho_{v},j} \approx \dot{c}\ewed_j h_j^{-m} = 1-\rvstar$ for $1 \leq j \leq l$. It then follows easily by considering $\delta_{\rho_{v},j-1}/\delta_{\rho_{v},j}$ that $\dot{\zeta}_j = \ewed_{j-1} / \ewed_j = (h_{j-1}/h_j)^m = \theta_j^m$. Thus, the precision coarsening factor for $\ewed$ is directly related to the mesh coarsening factor and, given $\ewed_j$ for any level, it is straightforward to compute $\ewed_j$ for the other levels. 

For FMG, the values of $\ewe$, $\eweb$, and $\eweq$ are also level dependent. Given $h_j$, the value for $\eweq_j$ follows easily by equating the bounds for $e_{\textrm{quant}}$ and $e_{\textrm{disc}}$, which yields $\eweq_j \leq (C/\check{c}) h_j^{k+m}$. Similarly, we can equate $\tfrac{1}{2}e_{\textrm{disc}}$ with $e_{\textrm{round},\ewe}$ and $e_{\textrm{round},\eweb}$ to get $\ewe_j \leq \tfrac{1}{2}(C/c)h_j^k$ and $\eweb_j \leq \tfrac{1}{2}(C/\bar{c}) h_j^{k+m}$. If $\theta = 2$, as is typical for geometric multigrid, this means that the sizes of the mantissas needed for $\ewed$ and $\ewe$ grow with $m$ and $k$ bits per level, respectively, while the growth for both $\eweb$ and $\eweq$ is $k+m$ bits per level. Since $m$ is typically one or two, this means that the precision required for $\ewed$ grows quite slowly while that for $\eweb$ and $\eweq$ can grow rather quickly for high-order discretizations. 

To estimate the absolute precisions required for a given level, $C$, $\dot{c}$, $c$, $\bar{c}$, and $\check{c}$ must all be determined. We do this empirically in Section~\ref{sec:results} for our model problem.

\section{Model Problem}
\label{sec:modelproblem}

To illustrate our rounding-error estimates, we consider the 1D biharmonic equation given by the following fourth-order ordinary differential equation (ODE) on $\Omega = (0,1)$ with homogeneous Dirichlet conditions on the boundary $\partial \Omega = \{ 0, 1 \}$:
\[
\quad \left\{ \hspace{5pt}
\parbox{5.00in}{
  \noindent Given $f \in L^2(\Omega)$, find $\sol \in C^4(\Omega)$ such that
    \begin{equation*}
      \begin{array}{rl}
       \sol'''' &= f \hspace{5pt} \text{in } \Omega\\
        \sol = \sol' &= 0 \hspace{5pt} \text{on } \partial \Omega.
      \end{array}
    \end{equation*}
  }
\right.
\]
This fourth-order model problem is useful because it leads to very ill-conditioned matrices with severe sensitivity to rounding errors. Although we do not present the results in detail here, we have also studied the 2D biharmonic  and found  the results to be qualitatively similar but computationally more expensive to obtain. 

To discretize the ODE, we apply a standard Bubnov-Galerkin finite element method to its weak form based on the same trial and test spaces given by
\begin{equation*}
\fespace = \left\{ \trialfun \colon \Omega \rightarrow \Re \Big| \trialfun \in H^2(\Omega), \left. \trialfun \right|_{\partial \Omega} = 0, \text{ and} \left. \trialfun' \right|_{\partial \Omega} = 0 \right\}.
\end{equation*}
The variational form then arises via the $L^2$-projection of $\sol'''' = f$ onto an arbitrary test function $\testfun \in \fespace$ followed by two applications of Green's first identity:

\[
\quad \left\{ \hspace{5pt}
\parbox{5.00in}{
  \noindent Given $f \in L^2(\Omega)$, find $\sol \in \fespace$ such that
  \begin{equation*}
    a(\sol,\testfun) = \ell(\testfun)
  \end{equation*}
  for every $\testfun \in \fespace$, where $a \colon \fespace \times \fespace \rightarrow \Re$ is the bilinear form defined by
  \begin{equation*}
    a(\trialfun,\testfun) = \int_\Omega \trialfun'' \testfun'' \ d \omega
  \end{equation*}
  and $\ell \colon \fespace \rightarrow \Re$ is the linear form defined by
  \begin{equation*}
    \ell(\testfun) = \int_\Omega f \testfun \ d \Omega
  \end{equation*}
  for all $\trialfun, \testfun \in \fespace$.
  }
\right.
\]

To discretize this variational form, we use $H^2$-conforming B-spline finite elements of order $k \ge 4$. (We do not consider splines of order $k = 3$ because, although they are smooth enough for this variational form, the jump discontinuities in the second derivative across quadratic spline elements hinders the optimal convergence rates in the $L^2$ norm \cite[Sec.~2.2]{Strang2008}.) A set of $n$ univariate B-spline basis functions of order $k$, $\{ B^k_i \}_{i=1}^n$ is defined by first providing a knot vector $\Xi = \left\{ \1_1, \1_2, \ldots, \1_{n+k} \right\}$, where $\1_1 = 0$, $\1_{n+k} = 1$, and $\1_i \le \1_{i+1}, \, i = 1,2,\ldots,n+k-1$,. To facilitate strong enforcement of the Dirichlet boundary conditions, we use an open knot vector, i.e., a knot vector with the first and last knots repeated $k$ times: $\1_1 = \ldots = \1_{k} = 0$ and $\1_{n+1} = \ldots = \1_{n+k} = 1$. The interior knots are distinct and, in fact, uniformly spaced. The Cox-de Boor recursion formula given below for $i = 1,2,\ldots, n$ uses this open knot vector to define the univariate B-spline basis for intermediate $\1 \in (0,1)$.
\begin{equation*}
    B^{k}_i(\1) = \frac{\1 - \1_i}{\1_{i+k} - \1_i} B^{k-1}_i(\1) + \frac{\1_{i+k+1}-\1}{\1_{i+k+1}-\1_{i+1}}B_{i+1}^{k-1}(\1), \,    B_i^0(\1) = \left\{ \begin{array}{rl}
    1, & \1 \in [\1_i, \1_{i+1})\\
    0, & \text{elsewhere} \end{array} \right.
\end{equation*}
For notational ease, we henceforth drop the superscript in $B^k$ that denotes the explicit $k$-dependence on the B-spline basis.

B-spline $h$-refinement is done by {\em knot insertion}, where new equi-spaced interior knots $ \tfrac{1}{2}(\1_i + \1_{i+1}), \, k-1 < i < n,$ are added to the original knot vector and the new set of basis functions are computed accordingly. Note that although knot insertion affects neighboring basis functions, it is still a relatively local process, which a variety of knot-insertion strategies exploit. Knot insertion also enables direct construction of prolongation and restriction operators that are naturally transposes of each other, and together with the system matrices they satisfy the Galerkin condition. See \cite{Piegl2012} for a discussion on spline basis functions and relevant algorithms.

The B-spline basis functions allow us to define the finite-dimensional trial space and test space, $\fespace_h \subset \fespace$, used for our Galerkin discretization:
\begin{equation*}
\fespace_h = \left\{ \trialfun_h \in \fespace \Big| \trialfun_h = \sum_i \trialfun_i B_i(\1) \right\} ,
\end{equation*}
where $\sol_i$ are the so-called control points. The discrete variational form of the ODE is then expressed as
$$
\quad \left\{ \hspace{5pt}
\parbox{5.00in}{
  \noindent Given $f \in L^2(\Omega)$, find $\sol_h \in \fespace_h$ such that
  \begin{equation*}
    a(\sol_h,\testfun_h) = \ell(\testfun_h)
  \end{equation*}
  for every $\testfun_h \in \fespace_h$.
  }
\right.
$$
Obtaining the discrete solution amounts to solving linear system \eqref{axb} with $A_{ij} = a(B_j,B_i)$, $\1_i = \sol_i$, and $b_i = \ell(B_i)$.

To assess the efficacy of iterative refinement, we consider the exact solution field:
\begin{equation}
  u = 1-\cos 2 \pi \1.
\label{eq:exactsolution}
\end{equation}
The corresponding forcing function is easily obtained by applying the differential operator, yielding:
\begin{equation*}
  f = -16 \pi^4 \cos 2 \pi \1.
\end{equation*}
This forcing function along with the known solution field $\sol$ enable an exact measure of the total error in our numerical results. We have experimented with other solution fields, but not found anything leading to different conclusions than what we present below.


\section{Numerical Experiments}
\label{sec:results}

We validate the theory here by studying convergence under grid refinement for the model problem. Accordingly, the multigrid solvers we study coarsen only in the mesh size as opposed to the degree of the basis functions. We use Matlab R2019a and the Advanpix toolbox for our experiments. The Advanpix toolbox allows for variable precision computations, although the interface only allows the number of \emph{decimal} digits, $d$, to be specified. 
For our ``exact'' computations, we use $34$ decimal digits of precision, which corresponds to $113$ bits. While slightly more than the 112 bits in IEEE quad precision, we nevertheless refer to $d = 34$ as ``quad precision'' in what follows. For this precision level, Advanpix provides $15$ bits for the exponent, which is consistent with IEEE quad precision. For \emph{all} other levels of precision, Advanpix provides $64$ bits for the exponent. The numerical results reported here are therefore unaffected by the limited dynamic range typically encountered in low precision environments. This aspect of the precision environment corresponds with the theory, which assumes that all computations stay in the dynamical range. Throughout, we use $\textrm{FP16}=2^{-11}$, $\textrm{FP32} = 2^{-24}$, $\textrm{FP64}=2^{-53}$, and $\textrm{FP128}=2^{-112}$ to denote one unit in last place (ulp) for half, single, double, and quad precision, respectively. 

All stiffness matrices and forcing vectors are formed and assembled in quad precision and, as such, are susceptible to quantization (and other) errors at this precision level. The exact solution $x_h$ associated with $u_h$ for a given mesh size $h$ is computed using $A_h$ and $b_h$ formed in quad precision followed by a solve in quad precision. (While this solution is not truly exact of course, its error is insignificant compared to the other errors we consider.) All exact solutions are obtained using Matlab's direct solver with quad precision. The stiffness matrices and forcing vectors pertaining to lower-precision quantizations are obtained by simply rounding their quad-precision counterparts to the desired precision, $\eweq$. We compute $\check{x}_h$ from the lower-precision coefficients by first adding trailing zeros to all numbers to extend them back to quad precision and then solving the resulting system in quad precision. The algebraic solution, $\tilde{x}_h$, is obtained as the solution of $\check{A}_h x_h = \check{b}_h$, where the solvers and precision levels used are specified below for each experiment. Given the true solution, $u$, from (\ref{eq:exactsolution}), along with $x_h$, $\check{x}_h$, and $\tilde{x}_h$, we evaluate the energy norm of the errors shown in (\ref{errdecomp}) in quad precision using $k^2$ quadrature points per element. For familiarity's sake, we use $p=k-1$ to refer to the polynomial degree of the finite element basis functions.

\begin{figure}
\centering
\includegraphics{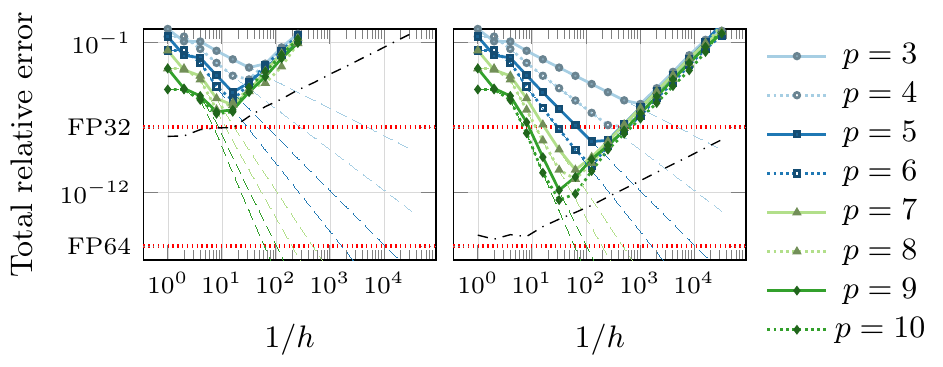}
\caption{The total error in the energy norm when using FMG in fixed precision with $7$ digits (left) and $15$ digits (right) corresponding roughly to single and double precision, respectively. Additionally, the discretization error for each polynomial degree is shown using dashed lines, while the black dash-dotted line shows $\|\fl(x_h)-x_h\|_{A_h}$. This latter quantity is the smallest error one can hope to achieve because it is obtained by simply rounding the exact solution to the chosen precision.}
\label{fig:totalerror}
\end{figure}

\begin{figure}
\centering  
\includegraphics{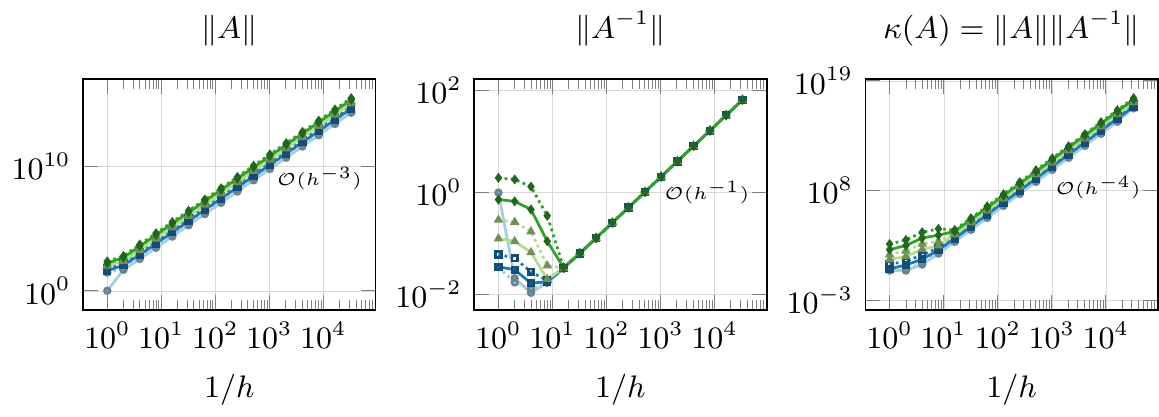}
\caption{Asymptotically, the condition number grows as $\bigO(h^{-4})$ as expected except for the first $5$ levels. For these coarse levels, $\|A^{-1}\|$ shows a distinct pre-asymptotic behavior due to the influence of the boundary conditions. Each plot shows curves for polynomial degrees from $p=3$ (bottom) to $p=10$ (top). The legend is the same as for Figure~\ref{fig:totalerror}.}
\label{fig:normA}
\end{figure}

We begin by confirming in Figure~\ref{fig:totalerror} that {\fmg } in fixed precision is susceptible to multiple types of errors that prevent it from obtaining discritization-error accuracy. For reference, we also show the error from simply rounding the exact solution to the available fixed precision. This is the smallest error we can hope to achieve for a given fixed precision, but {\fmg } is clearly unable to achieve this level of accuracy except possibly for very high order basis functions.

Next, we graph the norm and condition number of $A_h$ in Figure~\ref{fig:normA}. The important observation is that while the asymptotic behavior follows the theory, a pre-asymptotic region exists where the boundaries influence the results. As a consequence, we generally do not expect to see optimal results until after level $4$. We also note that $\|A\|$ clearly depends on $p$, which also carries into $\kappa(A)$. More specifically, $\|A\|$ is approximately proportional to $p^3h^{-3}$. Some of the other quantities that show up in the theory are given by $\kappa(P^tP) = 2^p$, $m_A = 2p+1$, and $m_P = p+2$ for all $h$ past the pre-asymptotic region.

\begin{figure}
\centering  
\includegraphics{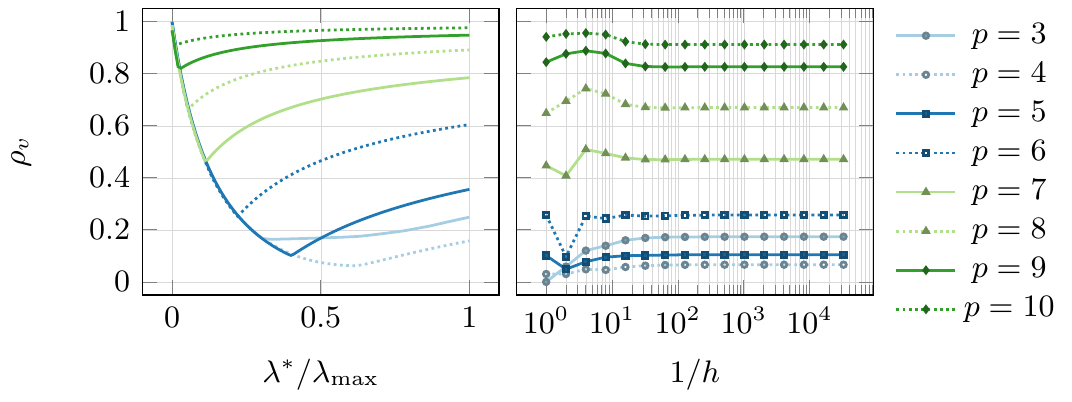}
\caption{The convergence rate of a $2^{\textrm{nd}}$-order Chebyshev smoother as a function of  percentage of the full spectrum that is targeted (left). Given the optimal fraction of the spectrum to target, the right plot shows the convergence rate as a function of mesh size. All computations are done using $34$ digits. For the left plot, $h=1/64$. The curves for $p=3$ are slight outliers because the coarsest level in this case contains no degrees of freedom after the boundary conditions have been imposed.}
\label{fig:Cheb-convergence-1d}
\end{figure}

Throughout, we use V$(1,0)$-cycles and a $2^{\textrm{nd}}$-order Chebyshev smoother based on $\check{D}_h^{-1}\check{A}_h$, where $\check{D}_h$ is the diagonal of $\check{A}_h$. For simplicity, our initial experiments do \emph{not} use progressive precision. On the coarsest level consisting of a single element, we use a single sweep of the smoother. The cost of  a direct solver at that level is insignificant, but also not necessary. 
The Chebyshev smoother depends on knowing the largest eigenvalue of $\check{D}_h^{-1}\check{A}_h$ as well as what percentage of the spectrum to target. We compute the largest eigenvalue using Matlab's standard \texttt{eigs}-function applied to $\check{A}_h$ in double precision. For each polynomial degree, we then determine the lower end of the spectrum, $\lambda^*$, to target by evaluating the convergence rate for a range of different values and picking the one that produces the smallest $\rv$. The results are shown in Figure~\ref{fig:Cheb-convergence-1d}. Clearly, a $2^{\textrm{nd}}$-order Chebyshev smoother is not very effective for high polynomial degrees, but designing effective smoothers is not our aim here, and our theory does not depend on the quality of the smoother.

The convergence rate, $\rv$, is computed in two steps. First, the error propagation matrix, $V$, is constructed column by column by applying one V-cycle with a zero initial guess to each of the canonical basis vectors. Next, $\rv = \|V\|_A$ is computed as the square root of the largest generalized eigenvalue of $V^TAV x = \lambda Ax$. All of this is done in quad precision. We could have approximated $\rv$ by solving $Ax=0$ with a random initial guess $x^{(0)}$ and choosing the largest value over many iterations $i$ of $\|x^{(i+1)}\|_A / \|x^{(i)}\|_A$. However, our eigenvalue approach determines the worst case for the energy convergence rate more effectively.

Next, we study the algebraic error for {\ir-\V} with and without mixed precision. For simplicity, we set $\ewed=\ewe$ because it allows us to isolate the effects of the other precision levels. Figure~\ref{fig:algerror-1d} shows that the error is initially dominated by iteration error before eventually being dominated by rounding error. In fixed-precision, the rounding error never stabilizes, which illustrates why it can be difficult to develop a reliable stopping criterion, but this is much less of an issue in mixed precision. It is also evident that the limiting accuracy, $\chi$, depends on the number of levels in the hierarchy. This is illustrated further in Figure~\ref{fig:MG-Cheb-convergence-1d} (left and middle), where the relative algebraic error after $1000$ V-cycles is shown as a function of $1/h$. As theory predicts, the relative algebraic error grows faster for fixed than for mixed precision, which illustrates the benefit of mixed-precision {\ir-\V}. By comparing algebraic and discretization errors, Figure~\ref{fig:MG-Cheb-convergence-1d} also confirms that, in the absence of progressive precision, multigrid is ultimately dominated by rounding errors.

\begin{figure}
\centering  
 \includegraphics{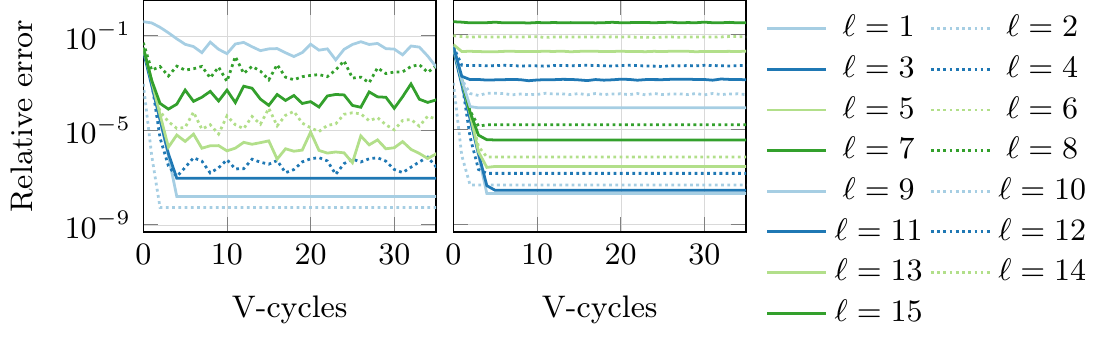}
   \caption{The relative algebraic error in the energy norm, i.e., $e_{\textrm{iter}} + e_{\textrm{round}}$, when solving $\check{A}_hx_h=\check{b}_h$ for $p=4$ with a random initial guess using {\ir-\V } and $1$ to $15$ levels in the hierarchy. The figure on the left illustrates fixed precision using $7$ decimal digits, while the figure on the right illustrates mixed precision using $7$ and $34$ decimal digits for $\ewe$ and $\eweb$, respectively. For both of these experiments, $\ewed=\ewe$ and $\eweq = \eweb$. Initially, $e_{\textrm{iter}}$ dominates until the limiting accuracy is reached. Furthermore, the error generally increases with the grid resolution, and for $\ell > 9$ the fixed precision solver fails to converge.}
  \label{fig:algerror-1d}
\end{figure}

\begin{figure}
\centering  
\includegraphics{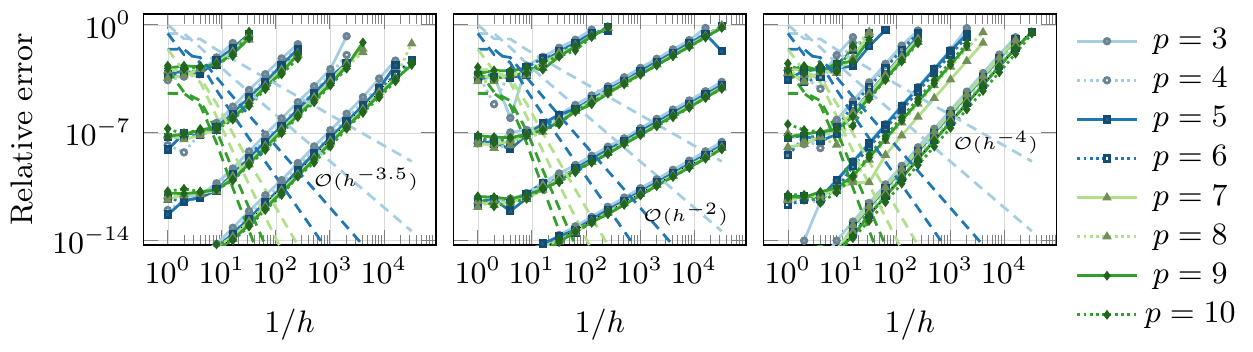}
\caption{The relative rounding error in the energy norm when using fixed precision (left) and mixed precision (middle) as well as the relative quantization error (right). In all three plots, the energy norm of the relative discretization error is shown using dashed lines. The four groups of solid and dotted lines (top to bottom) in each plot correspond to using $3$, $7$, $11$, and $15$ digits for $\eweb$ (left), $\ewe$ (middle), and $\eweq$ (right). For fixed precision (left), $\ewe=\eweb=\eweq$, while for mixed precision (middle), $\eweb$ and $\eweq$ both use $34$ digits. Referring back to Section~\ref{sec:errordefs} these results are compared to $\check{x}_h$ which is computed as described earlier in this section. Finally, for the quantization error experiment (right), $\ewe = \eweb = \eweq$, but the results are compared to the true solution, $x_h$. All the precisions are chosen to ensure that the error is dominated by the choice of $\eweb$, $\ewe$, and $\eweq$, respectively. All plots show the max relative error over the last $50$ iterations when solving the model problem using $1000$ V-cycles. For high precisions and higher polynomial degrees (for which the convergence rate deteriorates), more V-cycles would be necessary to recover the true error.}
\label{fig:MG-Cheb-convergence-1d}
\end{figure}

\begin{figure}
\centering  
\includegraphics{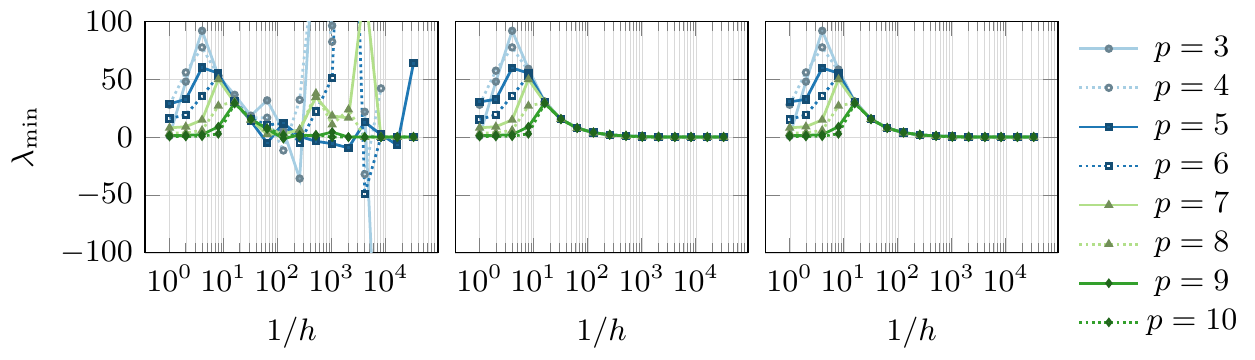}
\caption{The smallest eigenvalue of $\check{A}_h$ when $\check{A}_h$ is quantized to $\ewed$ (left), $\ewe$ (middle), and $\eweq$ (right) precision. For $\ewed$-precision it is clear that $\check{A}_h$ becomes indefinite for fine grid levels, and it should be noted that the smallest eigenvalue can be orders of magnitude below zero, which means that no small diagonal pertubation is likely to recover definiteness. However, it should also be noted that we do not encounter any indefinite matrices in $\ewe$ or $\eweq$ precision. Thus, we can reasonably estimate $\kappa(A_j)$ in $\ewe$ precision using the Lanczos method, for example. }
\label{fig:smallestProgressiveEigenvalue}
\end{figure}

As predicted by our theory, the growth of rounding error for mixed precision is proportional to $\kappa^{1/2}(A)$, or equivalently $\bigO(h^{-2})$, while the observed growth for fixed precision is $\bigO(h^{-3.5})$, which is slightly better than the rate predicted by theory. 
Also shown in Figure~\ref{fig:MG-Cheb-convergence-1d} is the quantization error obtained by solving $\check{A}_h x_h = \check{b}_h$ ``exactly'' for various $\eweq$ and comparing the result to $u_h$. As predicted by Theorem~\ref{abround}, this error grows as $\bigO(h^{-4})$. 

In Figure~\ref{fig:smallestProgressiveEigenvalue}, we confirm that quantization of $A_h$ to $\ewed$-precision can cause it to become indefinite and it can, in fact, become very indefinite for fine grids. 

To implement progressive precision FMG, we need to establish the precisions used at each level, which requires estimating the values for the constants $C$, $c$, $\bar{c}$, $\check{c}$, $\dot{c}$ discussed in Section~\ref{sec:precision}. The choice of $\dot{c}$ was discussed in Section~\ref{sec:precision}, while the values for $c$, $\bar{c}$, and $\check{c}$ can all be estimated based on the data shown in Figure~\ref{fig:MG-Cheb-convergence-1d}. In Section~\ref{sec:precision}, we established bounds for $e_{\textrm{round},\ewe}$, $e_{\textrm{round},\eweb}$, and $e_{\textrm{quant}}$. Here, we treat those expressions as strict equalities to account for the worst case, which yields $e_{\textrm{round},\ewe} = c\ewe h^{-m}$, $e_{\textrm{round},\eweb} = \bar{c}\ewe h^{-2m}$, and $e_{\textrm{quant}} = \check{c}\eweq h^{-2m}$. Generically, and with a slight abuse of notation, this gives us $e = c \ewe h^{-\alpha}$, where $e$ is one of the errors and $c$, $\ewe$, and $\alpha$ are the corresponding constant, precision, and exponent, respectively. It then follows that $\log(e/\ewe) = -\alpha\log(h) + \log(c)$. From this expression, we can compute a linear least squares estimate for $\log(c)$ and $\alpha$ using the data points in Figure~\ref{fig:MG-Cheb-convergence-1d} past the pre-asymptotic region (which in practice we take to be where $1/h > 4$). While Figure~\ref{fig:MG-Cheb-convergence-1d} only shows data for $4$ different precisions, we have conducted the experiments for all precisions between 3 and 15 decimal digits, and we use the data from all the experiments for the least squares estimates except that we omit the data from the pre-asymptotic region ($1/h\leq 16$). The estimates for $c$, $\bar{c}$, and $\check{c}$ are shown in Figure~\ref{fig:constants}.

Unfortunately, it is computationally quite expensive to obtain all the data required for these least squares estimates. As an alternative, given $c_\kappa \geq \kappa h^{2m}$, we can estimate all the constants quite cheaply by noting from Section~\ref{sec:precision} that $\check{c} = c_\kappa$, $\bar{c} \lesssim 4\alabar c_\kappa$, and $c = \sqrt{c_\kappa}$. Furthermore, from Figure~\ref{fig:normA}, we see that this can be estimated reliably as soon as we get past the pre-asymptotic region. In practice, we therefore only have to estimate the condition number for a few small matrices. Technically, we have a lower bound for $c_\kappa$ that is quite a bit higher in the pre-asymptotic region. However, the bounds for $c$, $\bar{c}$, and $\check{c}$ based on $c_\kappa$ are rather conservative to begin with, so we find in practice that it is safe to ignore this technicality and use the asymptotic value of $c_\kappa$ for all $h$. In fact, Figure~\ref{fig:constants} shows that the constants obtained using $c_\kappa$ can be several orders of magnitude larger than the least squares estimates. This may seem concerning, but each order of magnitude translates to using one additional decimal digit for the corresponding precision level, and this fixed amount of extra precision is relatively insignificant for the higher levels that tend to account for most of the computational cost.
The entire approach for computing the constants is captured in Algorithm~\ref{alg-constants}. Also included is the computation of $\I$ based on (\ref{eN}), with the results shown in Table~\ref{tab:FMGiterations}. 

\begin{figure}
\centering  
\includegraphics{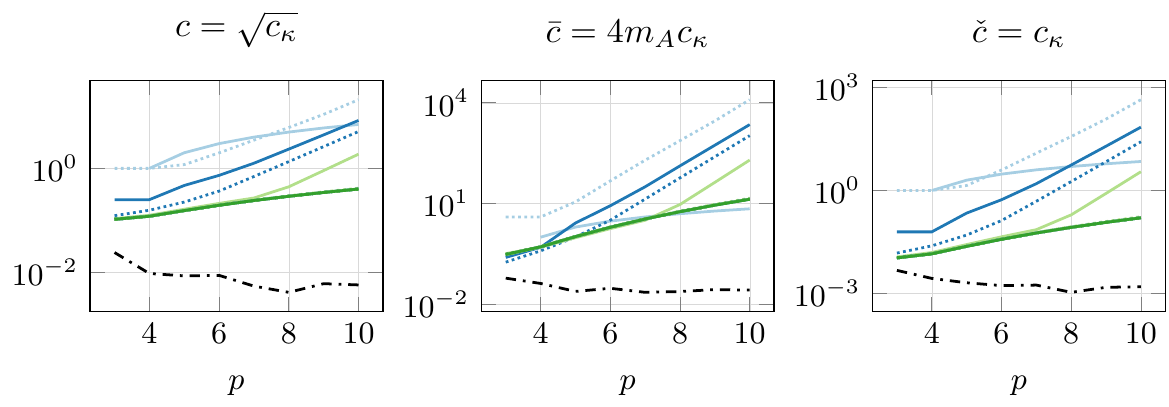}
\caption{Estimates for $c$, $\bar{c}$, and $\check{c}$. The legend here is the same as for Figure~\ref{fig:algerror-1d} with the colored lines representing estimates based on $c_\kappa$. The dash-dotted line in black is the linear least squares estimate based on the data partially shown in Figure~\ref{fig:MG-Cheb-convergence-1d}. The graphs here suggest that the true constants depend rather weakly (and inversely) on $p$, while the estimates based on $c_\kappa$ suggest a significant growth with $p$. }
\label{fig:constants}
\end{figure}

\begin{table}[h]
  \centering
  \begin{tabular}{ccccccccc}
  p                     & 3 & 4 & 5 & 6 & 7 &  8  &  9  & 10 \\
  \hline
  Theoretical N  & 2 & 2 & 2 & 4 & 8 & 17 & 38 & 85 \\
  Minimal N       & 1 & 1 & 1 & 2 & 4 &  9  & 28 & 50
  \end{tabular}
  \vspace{1em}
  \caption{Number of V-cycles required inside {\fmg } as a function of the polynomial degree, according to the theory in (\ref{eN}) and given the convergence rates obtained in Fig.~\ref{fig:Cheb-convergence-1d}. Also shown is the smallest number of V-cycles for which {\fmg } actually converges when using the constants estimated from $c_\kappa$. This shows that the theory is somewhat conservative, but mostly for high polynomial degrees where the convergence rate of the smoother is poor. The minimal number of V-cycles increases by one in a few cases if the smaller constants obtained from least squares estimation are used instead. }
  \label{tab:FMGiterations}
  \end{table}

It remains to estimate $C$. Given the discretization error as plotted in Figures~\ref{fig:totalerror} and \ref{fig:MG-Cheb-convergence-1d}, $C$ can easily be obtained by linear regression. However, those curves are based on computations in exact arithmetic and knowledge of the exact solution. Fortunately, we can estimate $C$ in the course of running {\fmg } based on the strong approximation property in (\ref{C}). Ultimately that leads us to the progressive FMG algorithm outlined in Algorithm~\ref{alg-pfmg}. Developing all the details to deal robustly with any pre-asymptotic region is beyond the scope of this paper. Still, this algorithm is notable by starting out in low precision and only advancing to higher precision as necessary in order to achieve the specified error goal. 

\noindent
\begin{center}
\begin{minipage}{0.97\linewidth}
\begin{algorithm}[H]
\footnotesize
\caption{Compute constants for progressive FMG}
\label{alg-constants}
\begin{algorithmic}[1]
\Require $A$, $p$, $m$, $\theta$, $\text{tol} < 1$, $\dot{\tau}_\text{tol} < 1$.
\State $q \leftarrow p+1-m$ \COMMENT{Compute $q$}
\State $\kappa_0 \leftarrow \| A_0 \| \| A_0^{-1} \|$ \COMMENT{Compute condition number of $A_0$}
\State $j \leftarrow 0$ \COMMENT{Initialize level counter}
\Repeat
\State $j \leftarrow j + 1$ \COMMENT{Update level counter}
\State $\kappa_j \leftarrow \| A_j \| \| A_j^{-1} \|$ \COMMENT{Compute condition number of $A_j$}
\Until {$\left| \frac{\kappa_j}{\kappa_{j-1} } \theta^{-2m} - 1 \right| < \text{tol}$} \COMMENT{Stop if in asymptotic region}
\State $c_\kappa \leftarrow \kappa_j \theta^{-2mj}$ \COMMENT{Compute $c_\kappa$}
\State $c \leftarrow \sqrt{c_{\kappa} }$ \COMMENT{Compute $c$}
\State $\bar{c} \leftarrow 4 m_A c_{\kappa} $ \COMMENT{Compute $\bar{c}$}
\State $\check{c} \leftarrow c_{\kappa} $ \COMMENT{Compute $\check{c}$}
\State $\dot{c} \leftarrow \dot{\tau}_\text{tol}/\sqrt{c_{\kappa}}$ \COMMENT{Compute $\dot{c}$}
\State Compute $\rho$ for level $j$ \COMMENT{Determine asymptotic convergence factor.}
\State $\I \leftarrow (\log_2(5) + q\log_2(\theta))/(| \log_2(\rho)|)$ \COMMENT{Compute theoretical number of V-cycles}
\State\Return $(c, \bar{c}, \check{c}, \dot{c},\I)$\COMMENT{Return constants}
\end{algorithmic}
\end{algorithm}
\end{minipage}
\end{center}
\vspace{1em}

\noindent
\begin{center}
\begin{minipage}{0.97\linewidth}
\begin{algorithm}[H]
\footnotesize
\caption{$\texttt{Progressive FMG$(1,0)$-Cycle (\pfmg)}$}
\label{alg-pfmg}
\begin{algorithmic}[1]
  \Require $\mathcal{L}$, $f$, $m$, $k$, $\theta$, $c$, $\bar{c}$, $\check{c}$, $\dot{c}$, $\I \ge 1$, $e_\text{goal} < 1$
  \State $q \leftarrow k-m$
  \State $x_0 \leftarrow 0$
  \State $j \leftarrow 1$
  \Loop
  \State $($\textcolor{med}{$\ewe$},\textcolor{hi}{$\eweb$},\textcolor{orange}{$\eweq$},\textcolor{lo}{$\ewed$}$) \leftarrow $ComputePrecisions$(c, \bar{c}, \check{c}, \dot{c},j)$ \COMMENT{Update all precision levels}
  \State \textcolor{orange}{$(\check{A}_j,\check{b}_j,\check{P}_j,h_j) \leftarrow$ Discretize$(\mathcal{L},f,k,j)$}\COMMENT{\textcolor{orange}{Discretize PDE at level $j$}}
  \State \textcolor{med}{$x_j \leftarrow \check{P}_jx_{j-1}$}\COMMENT{\textcolor{med}{Interpolate from previous level}}

  \State $i \leftarrow 0$\COMMENT{Initialize {\ir } }
  \While{$i < \I$}
  \State \textcolor{med}{$r_j \leftarrow$}\textcolor{red}{$\check{A}_jx_j - \check{b}_j$} \COMMENT{\textcolor{red}{Update {\ir } residual} and \textcolor{med}{round}}
  \State \textcolor{lo}{$y_j \leftarrow $\V$(\check{A}_j, r_j, \check{P}_j, j)$}\COMMENT{\textcolor{lo}{Compute correction by \V}}
  \State \textcolor{med}{$x_j \leftarrow x_j - y_j$}\COMMENT{\textcolor{med}{Update approximate solution of $A_jx_j = b_j$}}
  \State $i \leftarrow i + 1$\COMMENT{Increment {\ir } cycle counter}
  \EndWhile
  \If{$j>4$}\COMMENT{In asymptotic region ?}
  \State \textcolor{med}{$C \leftarrow \frac{\| \check{P}_j x_{j-1} - x_j \|_{\check{A}_j}}{h_{j-1}^q \|x_j\|_{\check{A}_j}}$}\COMMENT{\textcolor{med}{Estimate discretization constant}}
  \State \textcolor{med}{$\ell \leftarrow \left\lceil \frac{1}{q} \log_\theta \left( \frac{C}{e_\text{goal}} \right) \right\rceil$}\COMMENT{\textcolor{med}{Compute required number of levels}}
  \If{$\ell\le j$}
  \State\Return $x_j$ \COMMENT{Return solution with error less than $e_\text{goal}$}
  \EndIf
  \EndIf
  \State $j \leftarrow j+1$
  \EndLoop
\end{algorithmic}
\end{algorithm}
\end{minipage}
\end{center}
\vspace{1em}

Using the proposed algorithm, the precision requirements and the accuracy actually achieved is shown in Figure~\ref{fig:fmg-precisions-accuracy-1d}. Most importantly, we observe that {\pfmg} does in fact achieve discretization-error accuracy. However, we also note that the use of standard floating point types available in hardware can be surprisingly restrictive in terms of $h$. In Figure~\ref{fig:fmg-precisions-second-order}, we extrapolate the results to second-order PDEs since these are quite common in real applications. For high-order basis functions, the order of the PDE does not matter much, and we notice that while there is a difference between $\eweb$ and $\eweq$, it is relatively insignificant in this regime. For lower-order basis functions, a final observation is that $\eweq \ll \ewe$, meaning that $\check{A}_h$ used in the residual computation in {\ir } must be of sufficiently high precision.

\begin{figure}
\centering
  \includegraphics{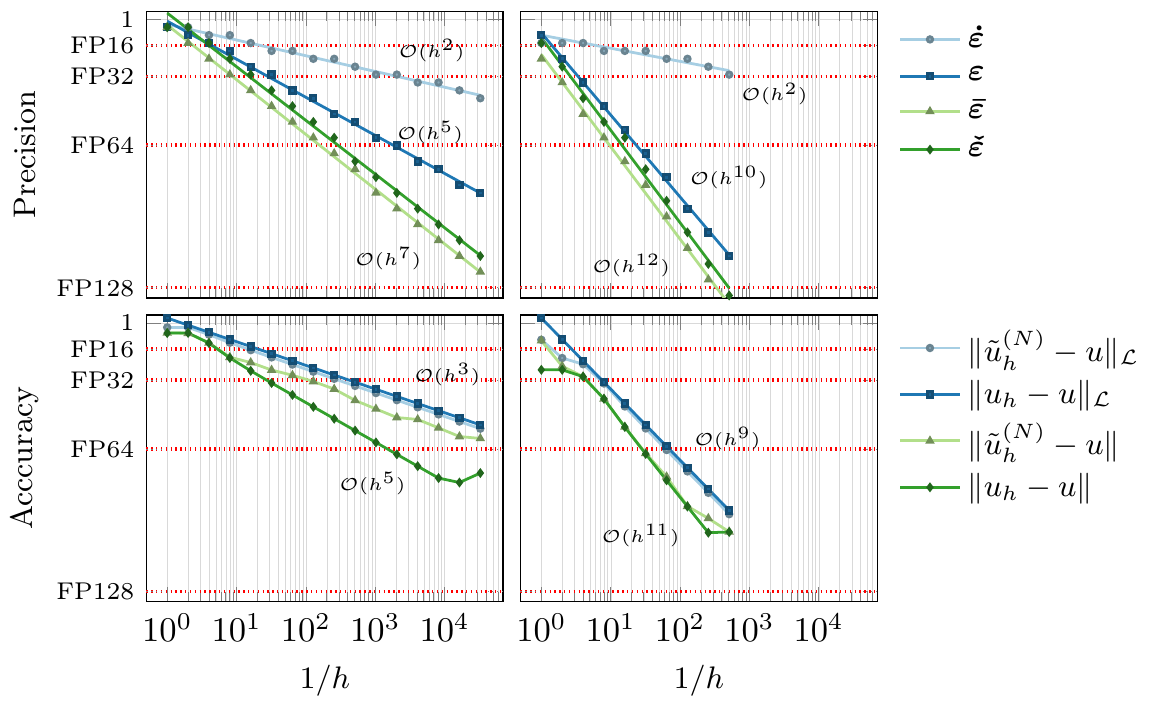}
  \caption{Precision requirements for progressive precision FMG in order to reach discretization error accuracy for the model problem (top), and the actual accuracy obtained compared to the true discretization error (bottom). The graphs shown here are for $p=4$ (left) and $p=10$ (right). For reference, we include the $L^2$ error in the accuracy plots, and notice that in general we do not achieve optimal convergence in the $L^2$-norm. However, for $p=10$ the large number of V-cycles we use due to the conservative nature of the estimate for $\I$ probably accounts for achieving close to optimal results in the $L^2$-norm too. }
  \label{fig:fmg-precisions-accuracy-1d}
\end{figure}

\begin{figure}
\centering
\includegraphics{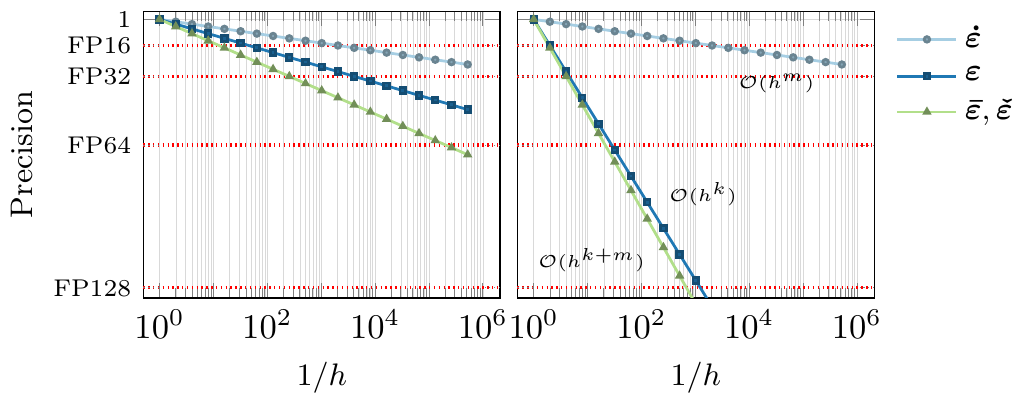}
  \caption{Predicted precision requirements for progressive precision FMG for a second-order PDE (assuming that $C=\dot{c}=c=\bar{c}=\check{c}=1$ for simplicity). The graphs shown here are for $p=1$ (left) and $p=10$ (right). For $p=1$, double precision suffices up to $1/h=2^{18}$, while, for $p=10$, anything beyond $1/h=2^{4}$ is contaminated by quantization errors when using double precision. Since quantization errors do not depend on the choice of solver, these limits apply to any kind of solver and not just FMG. Notice also that these limits on $h$ apply to problems in all dimensions.}
  \label{fig:fmg-precisions-second-order}
\end{figure}

\section{Conclusions}
\label{sec:conclusion}
This paper has successfully shown the potential of using progressive precision multigrid methods for solving linear elliptic PDEs to arbitrary accuracy given sufficient but parsimoniously chosen precisions in all computations. Compared to existing work, the key to this success on one hand is the observation that quantization errors play a critical role that must be accounted for. On the other hand is the observation that the V-cycle is very resilient and will work correctly even when it is being run in such low precision that the matrices involved may become indefinite simply from rounding them to working precision. The limitations introduced by quantization error ultimately lead to fairly strict limitations on the grid size that can be used to discretize the PDE for any given precision budget. This is worth noting because many computations in practice are limited to standard IEEE double precision at the high end. Insofar as the PDE solution is sufficiently smooth, higher-order elements generally allow for higher accuracy. However, when the grid size restriction is taken into consideration, the improvement for a given maximum precision is relatively small. 

In order to choose all the precision levels, we have introduced a heuristic that balances all the different types of errors. This approach ensures that we avoid ``overcomputation'', where one type of error is reduced only to be swamped by some other type of error. Assuming that one has appropriate bounds, this idea can easily be generalized to include other types of errors such as those from matrix assembly or even modeling errors. Given an appropriate performance model, it can also be generalized to account for different costs associated with different types of errors. Both of these extensions are interesting topics for future work.
Other topics for future work include the extension of the ideas presented here to algebraic multigrid, and a proper analysis of any effects due to overflow or underflow. 

\bibliographystyle{siamplain}
\bibliography{mpmg}

\end{document}